\newtheorem{theorem}{Theorem}[section]
\newtheorem{proposition}[theorem]{Proposition}
\newtheorem{corollary}[theorem]{Corollary}
\theoremstyle{definition}
\newtheorem{definition}[theorem]{Definition}
\newtheorem{remark}[theorem]{Remark}
\theoremstyle{approach}
\numberwithin{equation}{section}
\begin{document}
\setcounter{page}{1}
%%%%%%%%%%%%%%%%%%%%%%%%%%%%%%%%%%%%%%%%%%%%%%%%%%%%%%%%%%%%
\title[locally bounded approximate diagonal  modulo an ideal]{locally bounded approximate diagonal modulo an ideal  of Fr$\acute{E}$chet algebras}
%%%%%%%%%%%%%%%%%%%%%%%%%%%%%%%%%%%%%%
\author[S. Rahnama and A. Rejali]{S. Rahnama and A. Rejali}
%%%%%%%%%%%%%%%%%%%%%%%%
%%%%%%%%%%%%%%%%%%%%%%%%

%%%%%%%%%%%%%%%%%%%%%%%%%%%%
% Subject class; see http://www.ams.org/mathscinet/msc/msc2010.html

%%%%%%%%%%%%%%%%%%%%%%%%%%%%,
\keywords{Amenability, Amenability modulo an ideal , Banach algebra, Fr$\acute{e}$chet algebra.}

\begin{abstract}
For a Banach algebra $\mathcal A$ and a closed ideal $I$, 
the notion of bounded approximate diagonal modulo $I$ has been 
studied and investigated. In this paper we define the notion of 
locally bounded approximate diagonal modulo an ideal $I$ for a
Fr$\acute{e}$chet  algebra  $\mathcal A$ and obtain the relation between 
amenability modulo an ideal $I$ and the existence of locally
 bounded approximate diagonals modulo $I$.
\end{abstract}

\maketitle \setcounter{section}{-1}

\section{\bf Introduction}
Amenability of  Fr$\acute{e}$chet algebras   was first
introduced by Helemskii in \cite{Hel4} and \cite{Hel5}. Also in \cite{Pir} 
 Pirkovskii  studied this notion and obtained some results 
 about amenability of  Fr$\acute{e}$chet algebras. 
In \cite{Rahimi Amini} Amini and Rahimi introduced the notion of amenability modulo an
 ideal of Banach algebras. 
 Also they defined the notions of bounded approximate identities
 and bounded approximate diagonals modulo an ideal for a Banach algebra.
 They showed if  $\mathcal A$ is a Banach algebra and $I$ 
 is a closed ideal  of $\mathcal A$, $\mathcal A$ is amenable modulo $I$
 if and only if $\mathcal A$ has a bounded approximate diagonal modulo $I$.
Furthermore they prove that if $\mathcal A$ is amenable modulo 
$I$, it has a bounded approximate identity modulo $I$.

In this paper we generalize definition of bounded approximate diagonal 
modulo an ideal in Banach algebras for  Fr$\acute{e}$chet algebras.
The paper is organized as follows.
 In section $1$ we present some preliminaries and definitions related 
 to locally convex spaces and  Fr$\acute{e}$chet algebras. In section $2$
 we define the notion of locally bounded approximate diagonal modulo 
 an ideal for a  Fr$\acute{e}$chet algebra. 
Mainly  we show that amenability modulo an ideal for a Fr$\acute{e}$chet
algebra is equivalent to the existence of a locally bounded approximate 
diagonal modulo an ideal for this Fr$\acute{e}$chet
algebra.
 Finaly we prove that $\mathcal A$ is amenable modulo 
an ideal $I$ 
 if and only if ${\mathcal A}_e$, the unitization of
$\mathcal A$, is  amenable modulo $I$.
\section{\bf Preliminaries}

In this section, we mention some  definitions and properties about
 locally convex spaces and also Fr$\acute{e}$chet
algebras, which will be used throughout the paper. See 
\cite{MA} and \cite{ME} for more informations.

A   locally convex space is a topological vector space, whose topology 
is generated by translations of balanced, absorbent, convex sets.
Equivalenty a locally convex space is a topological vector space whose
topology is defined by a  fundamental system of seminorms. 
 We denote by $(E,p_{\alpha})$, a
locally convex space $E$ with a fundamental system of seminorms
$(p_{\alpha})_{\alpha}$.
All locally convex spaces assume to be Hausdorff in this paper.

A  topological algebra  ${\mathcal A}$ is an algebra which 
is a topological vector space and the   multiplication
$${\mathcal A}\times{\mathcal A}\longrightarrow {\mathcal A}, \;\;\;\; (a,b)\mapsto ab$$
is a separately continuous mapping; see \cite{Pir}. An important
 class of topological algebra is the class of Fr$\acute{e}$chet algebras. A
Fr$\acute{e}$chet algebra, denoted by $(\mathcal A, p_n)$, is a
complete topological algebra, whose topology is given by the
countable family of increasing submultiplicative seminorms. 
Also every closed subalgebra of a
Fr$\acute{e}$chet algebra is clearly a Fr$\acute{e}$chet algebra.

For  a  Fr$\acute{e}$chet algebra $(\mathcal A,p_n)$,  a
locally convex $\mathcal A$-bimodule is a locally convex
topological vector space $X$  with an
$\mathcal A$-bimodule structure  such that the corresponding mappings are
separately continuous. 
 Let $(\mathcal A,p_n)$ be a Fr$\acute{e}$chet algebra and $X$ be a 
locally convex  $\mathcal A$-bimodule.  Following \cite{Hel3}, a continuous  
derivation of $\mathcal A$ into $X$ is a continuous mapping 
$D$ from $\mathcal A$ into $X$ such that 
$$D(ab)=a.D(b)+D(a).b,$$
for all $a,b\in \mathcal A$. Furthermore for each 
$x\in X$ the mapping $\delta_x:\mathcal A\to X$ defined by
$$\delta_x(a)=a.x-x.a\;\;\;\;\;(a\in\mathcal A),$$
is a continuous derivation and is called the inner derivation associated with 
$x$.

We recall definition of an inverse limit from \cite{F}. Let we are given 
a family $(E_{\alpha})_{{\alpha}\in \Lambda}$ of algebras, 
where  $\Lambda$ is  a directed  set. Suppose we are given a family 
$f_{\alpha\beta}$ of homomorphisms such that $f_{\alpha\beta}:E_{\beta}\to E_{\alpha}$
for any ${\alpha,\beta\in \Lambda}$, with $\alpha\leq \beta$. A
projective system of algebras is  a family $\{(E_{\alpha}, f_{\alpha\beta})\}$
as above which in addition to the above relation, satisfies also the following condition 
$$f_{\alpha\gamma}=f_{\alpha\beta}\circ f_{\beta\gamma},\;\;\; (\alpha,\beta,\gamma\in\Lambda,\alpha\leq\beta\leq\gamma). $$
Now consider the cartesian product algebra  $F=\prod_{\alpha\in\Lambda}E_{\alpha}$, as well as the following subset of $F$,
$$E=\{x=(x_{\alpha})\in F:x_{\alpha}=f_{\alpha\beta}(x_{\beta}),\;\; \alpha\leq \beta\}. $$
Then $E$ is the projective (or inverse) limit of the given projective system
 $\{(E_{\alpha}, f_{\alpha\beta})\}$ and is denoted by $E= \underleftarrow{\lim}\{(E_{\alpha}, f_{\alpha\beta})\}$
or simply $E=\underleftarrow{\lim}E_{\alpha}$.

Now let $(\mathcal A, p_{\lambda})$ be a locally convex algebra.
For each ${\lambda\in \Lambda}$ consider the ideal $N_{\lambda}= \ker p_{\lambda}$ and
 the respective normed algebra $\frac{\mathcal A}{N_{\lambda}}$. 
Suppose that $\varphi_{\lambda}:\mathcal A\to\mathcal A_{\lambda},\varphi_{\lambda}(x)=x_{\lambda}=x+{N_{\lambda}} $
be the corresponding quotient map, which is clearly a continuous surjective 
homomorphism. Now for any $\lambda, \gamma\in\Lambda$ with $\lambda\leq \gamma$, one 
has $N_{\lambda}\subseteq N_{\gamma}$. So that the linking maps
$$\varphi_{\lambda\gamma}:\frac{\mathcal A}{N_{\gamma}}\to\frac{\mathcal A}{N_{\lambda}},\;\;\;\varphi_{\lambda\gamma}(x+N_{\gamma})=x+{N_{\lambda}}, $$
are well defined continuous surjective homomorphisms such that
$\varphi_{\lambda\gamma}\circ \varphi_{\gamma}=\varphi_{\lambda}$. Hence $\varphi_{\lambda\gamma}$'s
are uniquely extended to continuous homomorphisms between
the Banach algebras $\mathcal A_{\gamma}$ and $\mathcal A_{\lambda}$, we retain the symbol 
$\varphi_{\lambda\gamma}$ for the extentions too, which  
$\mathcal A_{\gamma}$ is the compeletion of $\frac{\mathcal A}{N_{\gamma}}$.
The families 
$(\frac{\mathcal A}{N_{\lambda}},\varphi_{\lambda\gamma} )$ [respectively, $(\mathcal A_{\lambda},\varphi_{\lambda\gamma} )$],
form inverse system of 
normed,[ respectively, Banach] algebras. Their corresponding inverse limits denoted 
by $\underleftarrow{\lim}\frac{\mathcal A}{N_{\lambda}}$ and $\underleftarrow{\lim}\mathcal A_{\lambda}$.
Moreover in the case when the initial algebra $\mathcal A$ is complete, one has 
$$\mathcal A=\underleftarrow{\lim}\frac{\mathcal A}{N_{\lambda}}  =\underleftarrow{\lim}\mathcal A_{\lambda},$$
up to topological isomorphisms.
Let $\mathcal A$ be a Fr$\acute{e}$chet algebra
which its topology defined by the family of increasing
 submultiplicative seminorms $(p_n)_{n\in\mathbb{N}}$.
For each $n\in\mathbb{N}$ let $\varphi_n:\mathcal A\to \frac{\mathcal A}{\ker p_n}$
be the quotient map. Then $\frac{\mathcal A}{\ker p_n}$ is naturally a normed
algebra, normed by setting $\|\varphi_n(a)\|_n=p_n(a)$ for each $a\in\mathcal A$.
The compeletion  $(\mathcal A_n, \|.\|_n)$ 
is a Banach algebra. Henceforth we consider $\varphi_n$ as a mapping from $\mathcal A$ into
$\mathcal A_n$. We call it the canonical map. It is important to note that $\varphi_n(\mathcal A)$ is a dense 
subalgebra of $\mathcal A_n$ and  in general $\mathcal A_n\neq\varphi_n(\mathcal A)$.
Since $p_n\leq p_{n+1}$, there is a naturally induced, norm-decreasing
homomorphism $d_n:\mathcal A_{n+1}\to \mathcal A_n$ such that 
$d_n\circ \varphi_{n+1}=\varphi_{n}$ for each $n\in\mathbb{N}$. One can show  that 
$\mathcal A=\underleftarrow{\lim}(\mathcal A_n, d_n)$. 

The above is the Arense-Michael decomposition of $\mathcal A$, 
which expresses Fr$\acute{e}$chet algebra as an reduced inverse limit of Banach algebras. 

Now choose an Arens-Michael decomposition  $\mathcal A=\underleftarrow{\lim}\mathcal A_n$
and let $I$ be a closed ideal of $\mathcal A$.
Then it is easy to see that $I=\underleftarrow{\lim}\overline{I_n}$ is an 
Arens-Michael decomposition of $I$, where $\varphi_{n}:I\to I_n$
is the canonical map.(see \cite{Pir}).

According  to \cite{Pir} if $\mathcal A$ is a locally convex algebra
and $X$ is a left locally convex  $\mathcal A$-module, then  a continuous
seminorm $q$ on $X$  is m-compatible if there exists a continuous 
submultiplicative seminorm $p$ on  $\mathcal A$ such that 
$$q(a.x)\leq p(a)q(x),\;\;\;(a\in \mathcal A, x\in X).$$
Also by \cite[3.4]{Pir1} if $\mathcal A$ is a Fr$\acute{e}$chet algebra
and $X$ is a complete left  $\mathcal A$-module
with a jointly continuous left module action, then the topology
on $X$ can be determined by a directed family of m-compatible
seminorms.

\section{\bf Locally bounded approximate diagonal  modulo an ideal of a  Fr$\acute{e}$chet algebra }

Let $\mathcal A$ be a Banach algebra and $I$ be a closed ideal of $\mathcal A$. According to
\cite{Rahimi  Amini}, $\mathcal A$ is amenable modulo $I$, if for every Banach 
$\mathcal A$-bimodule $E$ such that $I.E=E.I=0$ and every derivation $D$ from $\mathcal A$
into $E^*$, there exists $\varphi\in E^*$ such that
$$D(a)=a.\varphi-\varphi.a,  \;\;\;\;(a\in\mathcal A\setminus I).$$
In \cite[Definition 2.1]{Rahnama} we introduced the  definition of amenability  modulo an ideal
for a Fr$\acute{e}$chet algebra. Also we  extended some 
results of \cite{Rahimi1}, for Fr$\acute{e}$chet algebras.
In fact from \cite{Rahnama}, we have the following definition.

\begin{definition}\label{D1}\rm
Let $(\mathcal A, p_{n})$ be a Fr$\acute{e}$chet algebra and $I$ be a closed ideal
of  $\mathcal A$. 
 $\mathcal A$  is called amenable modulo $I$,  if  for every Banach  
$\mathcal A$-bimodule $E$ such that $E.I=I.E=0$ each
continuous derivation from $\mathcal A$ into $E^*$ is
 inner on $\mathcal A\setminus I$.
\end{definition}

In this section we discuss about approximate diagonal modulo an ideal for a Fr$\acute{e}$chet algebra.

Let $(\mathcal A, p_n)$ be a  Fr$\acute{e}$chet algebra and  $I$ be a closed ideal of 
$\mathcal A$. 
By \cite[Lemma 22.9]{ME} $\frac{\mathcal A}{I}$ endowed with the quotient topology is
a  Fr$\acute{e}$chet space and the topology is defined by the seminorms
$$\hat{p}_n(a+I)=\inf\{p_n(a+b):\;\;\; b\in I\}.$$
Moreover the multiplication 
$$(a+I,b+I)\mapsto ab+I\;\;\;\;(a,b\in\mathcal A),$$
on $\frac{\mathcal A}{I}$ is continuous and $ (\frac{\mathcal A}{I},\hat{p}_n)$ is a  Fr$\acute{e}$chet algebra; see\cite[3.2.10]{Gold}.
Also we recall projective tensor product of Fr$\acute{e}$chet algebra 
$(\mathcal A, p_n)$ which has been introduced in \cite{Shi}.  It will be denoted by 
$(\mathcal A\widehat{\otimes}\mathcal A, r_n)$, where 
$$r_n(u)=\inf\{\sum_{i\in\mathbb{N}}p_n(a_i)p_n(b_i);\;\;\;u=\sum_{i\in\mathbb{N}}a_i\otimes b_i\},$$
for each $u\in \mathcal A\widehat{\otimes}\mathcal A$. By \cite{Shi} and \cite[Theorem 2]{SMI}
and also \cite[Theorem 45.1]{Treves}, $(\mathcal A\widehat{\otimes}\mathcal A, r_n)$
is again a  Fr$\acute{e}$chet algebra. Also  $(\mathcal A\widehat{\otimes}\mathcal A, r_n)$
is a Fr$ \acute{e}$chet $\mathcal A$-bimodule with the following module
actions 
$$a.(b\otimes c)=ab\otimes c \;\;\; and \;\;\; (b\otimes c).a= b\otimes ca \;\;\;(a,b,c\in\mathcal A).$$
Also the corresponding diagonal operator of 
$\frac{\mathcal A}{I}$ is defined by 
$$\pi_{\frac{\mathcal A}{I}}: \frac{\mathcal A}{I}\widehat{\otimes}  \frac{\mathcal A}{I}\to  \frac{\mathcal A}{I}, \;\;\;\;\; (a+I)\otimes(b+I)\mapsto ab+I,\;\;\;\;(a,b\in \mathcal A).$$
Clearly $\pi_{\frac{\mathcal A}{I}}$ is an $\mathcal A$-bimodule homomorphism.

A particularly important result related with amenability modulo an ideal 
of a Banach algebra is the existence of a bounded approximate diagonal
and virtual diagonal modulo an ideal for a Banach algebra.
In fact according to \cite[Definition 6]{Rahimi1}, if $\mathcal A$
is a Banach algebra and $I$ is a closed ideal of $\mathcal A$, 
a bounded net $(m_{\alpha})\subseteq \frac{\mathcal A}{I}\widehat{\otimes}  \frac{\mathcal A}{I}$
is an approximate diagonal modulo $I$ if 
$$\lim_{\alpha}(a.\pi_{\frac{\mathcal A}{I}}(m_{\alpha})-\tilde{a})=0,\;\;\;(a\in\mathcal A, \;\;\tilde{a}=a+I),$$
$$\lim_{\alpha}(a.m_{\alpha}-m_{\alpha}.a)=0\;\;\;\;(a\in \mathcal A\setminus  I).$$

Let us now consider the  Fr$\acute{e}$chet algebra $(\mathcal A, p_n)$. 
By \cite{Pir}, a bounded net $(m_{\alpha})\subseteq{\mathcal A}\widehat{\otimes} {\mathcal A}$
is a bounded approximate diagonal for $\mathcal A$ if  
$$\lim_{\alpha}(a.\pi_{{\mathcal A}}(m_{\alpha})-{a})=0,\;\;\;and \;\;\;
\lim_{\alpha}(a.m_{\alpha}-m_{\alpha}.a)=0,\;\;\;\;(a\in \mathcal A).$$
Also according to \cite[Definition 6.2]{Pir}, if $\mathcal A$ is a complete
locally convex algebra, $\mathcal A$ has a locally bounded approximate
diagonal if for each zero neighborhood $U\subseteq \mathcal A$
there exists $C>0$ such that for each finite subset $F\subseteq \mathcal A$
there exists $M\in\overline{\Gamma(U\otimes U)}$ such that 
$a.M-M.a\in \overline{\Gamma(U\otimes U)}$ and $a.\pi_{\mathcal A}(M)-a\in U$
for all $a\in F$, where $\overline{\Gamma(U\otimes U)}$ denotes the closure 
of the absolutely convex hull of the set 
$$U\otimes U=\{a\otimes b:\;\; a,b\in U\}.$$
According to these definitions we can generalize the  definitions as follows.

\begin{definition}\label{D5} 
Let $(\mathcal A, p_n)$ be a  Fr$\acute{e}$chet algebra and   $I$ be a 
closed ideal of $\mathcal A$. We say that $\mathcal A$ has
 a bounded approximate diagonal modulo $I$, if there exists a bounded 
net $(m_{\alpha})_{\alpha}\subseteq  \frac{\mathcal A}{I}\widehat{\otimes}  \frac{\mathcal A}{I}$ such that 
$$\lim_{\alpha}(a.\pi_{\frac{\mathcal A}{I}}(m_{\alpha})-\tilde{a})=0 ,\;\;\;\;(a\in\mathcal A,\tilde{a}=a+I), $$
$$\lim_{\alpha}(a.m_{\alpha}-m_{\alpha}.a)=0\;\;\;\;(a\in \mathcal A\setminus  I).$$
Also we say that $\mathcal A$ has a locally bounded approximate diagonal 
modulo $I$, if there exists a family   $\{C_{n} :\;\;\; n\in\mathbb{N}\}$
of positive real numbers such that for each finite set $F\subseteq \mathcal A\setminus I$,
 each  $n\in \mathbb{N}$, and each $\varepsilon>0$ 
there exists $M\in (\frac{\mathcal A}{I}\widehat{\otimes}  \frac{\mathcal A}{I}, \hat{r}_n)$  such that
$$\hat{r}_n(M)\leq C_{n}\;\;\;\;and\;\;\;\hat{r}_n(a.M-M.a)<\varepsilon,\;\;\;(a\in F),$$
$$\hat{p}_n(a.\pi_{\frac{\mathcal A}{I}}(M)-\tilde{a})<\varepsilon,\;\;\;\;(a\in\mathcal A, \;\tilde{a}=a+I). $$
\end{definition}

In what follows, we restrict ourselves to the relation between
amenability modulo an ideal and the existence of a locally bounded 
approximate diagonal modulo an ideal for a  Fr$\acute{e}$chet algebra, 
it is to be noted that the  role played by reduced inverse limit is  essential.
\begin{theorem}\label{t5}
Let  $\varphi :(\mathcal A,p_n)\to (\mathcal B,q_m)$ be a continuous homomorphism
of  Fr$\acute{e}$chet algebras with dense range and let 
$I$ be a closed ideal of $\mathcal A$ and $J$ be a closed ideal of $\mathcal B$
such that $\varphi(I)\subseteq J$.  Suppose that
$\mathcal{A}$ has a  locally bounded approximate  diagonal modulo $I$.
 Then $\mathcal B$ has a locally bounded approximate diagonal  modulo $J$.
\end{theorem}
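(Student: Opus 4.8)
The plan is to push a locally bounded approximate diagonal modulo $I$ forward through the map that $\varphi$ induces on the quotient–tensor level, and then to repair the two defining estimates by a density argument. Since $\varphi(I)\subseteq J$, the rule $\bar\varphi(a+I)=\varphi(a)+J$ defines a continuous homomorphism $\bar\varphi\colon\frac{\mathcal A}{I}\to\frac{\mathcal B}{J}$; because $\varphi$ has dense range and the quotient map $\mathcal B\to\frac{\mathcal B}{J}$ is continuous and open, $\bar\varphi$ has dense range as well. Write $\Phi=\bar\varphi\widehat{\otimes}\bar\varphi\colon\frac{\mathcal A}{I}\widehat{\otimes}\frac{\mathcal A}{I}\to\frac{\mathcal B}{J}\widehat{\otimes}\frac{\mathcal B}{J}$ for the induced continuous map on completed projective tensor products. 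I would first record the two intertwining identities that drive the whole argument: evaluating on elementary tensors and extending by continuity gives $\pi_{\frac{\mathcal B}{J}}\circ\Phi=\bar\varphi\circ\pi_{\frac{\mathcal A}{I}}$, together with $\Phi(a.M)=\varphi(a).\Phi(M)$ and $\Phi(M.a)=\Phi(M).\varphi(a)$ for all $a\in\mathcal A$; in particular $\Phi(a.M-M.a)=\varphi(a).\Phi(M)-\Phi(M).\varphi(a)$.

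Next I would convert continuity of $\varphi$ into explicit seminorm bounds. For each $m\in\mathbb{N}$, continuity yields $n=n(m)\in\mathbb{N}$ and $K_m>0$ with $q_m(\varphi(a))\le K_m\,p_{n(m)}(a)$ for all $a$, which descends to $\hat q_m(\bar\varphi(a+I))\le K_m\,\hat p_{n(m)}(a+I)$ on the quotient and, estimating each representation $u=\sum a_i\otimes b_i$ termwise, to $\hat s_m(\Phi(u))\le K_m^2\,\hat r_{n(m)}(u)$, where $\hat s_m$ denotes the projective tensor seminorm on $\frac{\mathcal B}{J}\widehat{\otimes}\frac{\mathcal B}{J}$. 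Given the family $\{C_n\}$ furnished for $\mathcal A$, I would then define the candidate family for $\mathcal B$ by $C'_m=K_m^2\,C_{n(m)}$.

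Now fix $m\in\mathbb{N}$, $\varepsilon>0$, and a finite set $F'\subseteq\mathcal B\setminus J$. Since $\varphi(\mathcal A)$ is dense in $\mathcal B$ while $\varphi(I)\subseteq J$ lies in the closed set $J$, each $b'\in F'$ lies in the closure of $\varphi(\mathcal A\setminus I)$, so it can be approximated in $\hat q_m$ by some $\varphi(a_{b'})$ with $a_{b'}\in\mathcal A\setminus I$; set $F=\{a_{b'}:b'\in F'\}\subseteq\mathcal A\setminus I$. Applying the locally bounded approximate diagonal of $\mathcal A$ at index $n(m)$, set $F$, and a tolerance $\varepsilon'$ to be fixed at the end, I obtain $M$ with $\hat r_{n(m)}(M)\le C_{n(m)}$, with $\hat r_{n(m)}(a.M-M.a)<\varepsilon'$ for $a\in F$, and with $\hat p_{n(m)}(a.\pi_{\frac{\mathcal A}{I}}(M)-\tilde a)<\varepsilon'$ for all $a\in\mathcal A$. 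I then put $M'=\Phi(M)$, so that $\hat s_m(M')\le K_m^2 C_{n(m)}=C'_m$ at once.

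It remains to verify the two conditions for $M'$, and this density step is the main obstacle. Pushing the commutator estimate through $\Phi$ gives $\hat s_m(\varphi(a_{b'}).M'-M'.\varphi(a_{b'}))<K_m^2\varepsilon'$; converting from $\varphi(a_{b'})$ back to $b'$ costs, by submultiplicativity ($m$-compatibility) of $\hat q_m$ together with $\hat s_m(M')\le C'_m$, only a term bounded by $2\,\hat q_m(b'-\varphi(a_{b'}))\,C'_m$. Likewise, applying $\bar\varphi$ to $a.\pi_{\frac{\mathcal A}{I}}(M)-\tilde a$ and using $\pi_{\frac{\mathcal B}{J}}\circ\Phi=\bar\varphi\circ\pi_{\frac{\mathcal A}{I}}$ gives $\hat q_m(\varphi(a).\pi_{\frac{\mathcal B}{J}}(M')-\widetilde{\varphi(a)})<K_m\varepsilon'$ for every $a\in\mathcal A$, so the required estimate already holds on the dense range of $\varphi$; for arbitrary $b\in\mathcal B$ I insert an approximant $\varphi(a)\approx b$ and control the extra terms by $\hat q_m(b-\varphi(a))\,\hat q_m(\pi_{\frac{\mathcal B}{J}}(M'))\le\hat q_m(b-\varphi(a))\,C'_m$ and by $\hat q_m(b-\varphi(a))$, using that a single fixed $M'$ serves all $b$ precisely because $\hat s_m(M')\le C'_m$ is uniform. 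Choosing $\varepsilon'$ and the density tolerances small enough, depending only on $m,\varepsilon,K_m,C'_m$, forces every bound strictly below $\varepsilon$, exhibiting $M'$ as required and showing that $\mathcal B$ has a locally bounded approximate diagonal modulo $J$ with constants $\{C'_m\}$.
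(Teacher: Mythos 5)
Your proposal is correct and follows essentially the same route as the paper: induce $\bar\varphi$ and $\bar\varphi\widehat{\otimes}\bar\varphi$ on the quotients, push the diagonal forward, control the seminorms via continuity constants, and repair the two estimates by approximating each $b'\in\mathcal B\setminus J$ by some $\varphi(a)$ with $a\in\mathcal A\setminus I$. If anything, you are slightly more careful than the paper in justifying that such approximants exist and in verifying the condition $\hat q_m(b.\pi_{\frac{\mathcal B}{J}}(M')-\tilde b)<\varepsilon$ for \emph{all} $b\in\mathcal B$ rather than only for $b\in F'$.
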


\begin{proof}
Let $q_{\mathcal B}:(\mathcal B,q_m)\to (\frac{\mathcal B}{J},\hat{q}_m)$ be 
the quotient map. So $q_{\mathcal B}\circ \varphi:\mathcal A\to\frac{\mathcal B}{J}$
is a continuous homomorphism with dense range. On the other hand $\varphi(I)\subseteq J$, 
therefore $I\subseteq \ker q_{\mathcal B}\circ \varphi$. Thus the function 
$\bar{\varphi}:(\frac{\mathcal A}{I},\hat{p}_n)\to(\frac{\mathcal B}{J},\hat{q}_m)$ defined by 
$\bar{\varphi}(a+I)= q_{\mathcal B}\circ \varphi(a)=\varphi(a)+J$ is well defined 
and it is  a continuous homomorphism. We can define
$\bar{\varphi}{\otimes}\bar{\varphi}:(\frac{\mathcal A}{I}\widehat{\otimes }\frac{\mathcal A}{I},\hat{r}_n)
\to (\frac{\mathcal B}{J}\widehat{\otimes}\frac{\mathcal B}{J},\hat{S}_m)$ by
\begin{eqnarray*}
\bar{\varphi}\otimes\bar{\varphi}((a+I)\otimes (b+I))&=&
\bar{\varphi}(a+I)\otimes\bar{\varphi}(b+I)
\\
&=&({\varphi}(a)+J)\otimes ({\varphi}(b)+J).
\end{eqnarray*}
Since $\bar{\varphi}\otimes\bar{\varphi}$ is a continuous map, for each $m\in\mathbb{N}$ 
there exist $\alpha_m>0$ and $n_1\in\mathbb{N}$ such that 
$$\hat{S}_m(\bar{\varphi}\otimes\bar{\varphi}(M))\leq \alpha_m \hat{r}_{n_1}(M), \;\;\;(M\in \frac{\mathcal A}{I}\widehat{\otimes }\frac{\mathcal A}{I}).$$
Without loss of generality we can assume that  $\alpha_m>1$.
On the other hand by the joint continuity of the module actions 
$$(\frac{\mathcal B}{J}\widehat{\otimes}\frac{\mathcal B}{J})\times {\mathcal B}\to\frac{\mathcal B}{J}\widehat{\otimes}\frac{\mathcal B}{J},\;\;\;(M^{\prime},b)\mapsto M^{\prime}.b,\;\;\;(M^{\prime}\in  \frac{\mathcal B}{J}\widehat{\otimes}\frac{\mathcal B}{J} , b\in {\mathcal B}),$$
and 
$${\mathcal B}\times(\frac{\mathcal B}{J}\widehat{\otimes}\frac{\mathcal B}{J})\to\frac{\mathcal B}{J}\widehat{\otimes}\frac{\mathcal B}{J},\;\;\;(b,M^{\prime})\mapsto b.M^{\prime},\;\;\;(M^{\prime}\in  \frac{\mathcal B}{J}\widehat{\otimes}\frac{\mathcal B}{J} , b\in {\mathcal B}),$$
and using \cite[Paga 7]{Pir} for each $m\in \mathbb{N}$, there exists  $m_0\in\mathbb{N}$ such that 
$$\hat{S}_m( M^{\prime}.b)\leq  \hat{S}_{m}( M^{\prime}) {q}_{m_0}(b), \;\;\;(M^{\prime}\in  \frac{\mathcal B}{J}\hat{\otimes}\frac{\mathcal B}{J} , b\in {\mathcal B}), $$
and 
 $$\hat{S}_m(b. M^{\prime})\leq  \hat{S}_{m}( M^{\prime}) {q}_{m_0}(b), \;\;\;(M^{\prime}\in  \frac{\mathcal B}{J}\widehat{\otimes}\frac{\mathcal B}{J} , b\in {\mathcal B}).$$
Also by \cite{Tay}, $\frac{\mathcal B}{J}$ is a  Fr$\acute{e}$chet  $\mathcal B$-bimodule by the module actions defined by 
$$\mathcal B\times \frac{\mathcal B}{J}\to \frac{\mathcal B}{J},\;\;\; (b, c+J)\mapsto bc+J, \;\;\;(b,c\in\mathcal B),$$
$$ \frac{\mathcal B}{J}\times\mathcal B\to \frac{\mathcal B}{J},\;\;\; ( c+J,b)\mapsto cb+J, \;\;\;(b,c\in\mathcal B),$$
and these two actions are jointly continuous. So by \cite[Page 7]{Pir}
for each $m\in\mathbb{N}$ there exists $m_2\in\mathbb{N}$ such that 
$$\hat{q}_m(bc+J)\leq q_{m_2}(b)\hat{q}_m(c+J), \;\;\; (b,c\in \mathcal B),$$
and 
$$\hat{q}_m(cb+J)\leq q_{m_2}(b)\hat{q}_m(c+J), \;\;\; (b,c\in \mathcal B).$$
Also the map $\bar{\varphi}:\frac{\mathcal A}{I}\to \frac{\mathcal B}{J}$ is continuous. So 
for each $m\in\mathbb{N}$ there exist $n_0\in\mathbb{N}$ and $k_m>0$  such that 
$$\hat{q}_m(\bar{\varphi}(a+I))\leq k_m \hat{p}_{n_0}(a+I), \;\;\; (a\in\mathcal A).$$
Without loss of generality, we may assume   that, $k_m>1$.
Let $\{C_n:\;\;n\in\mathbb{N}\}$ be a family of positive real numbers as in Definition \ref{D5}. 
Without loss of generality we may assume   that $C_n>1$ for each $n\in\mathbb{N}$.
Given a finite set $F^{\prime}\subseteq \mathcal B\setminus J$, $m\in\mathbb{N}$ and 
each $\varepsilon>0$, find a finite set $F\subseteq \mathcal A\setminus I$ such that 
$q_m(\varphi(a)-b)<\frac{\varepsilon}{3 \alpha_m C_{n_1}}$ for all $a\in F$ and $b\in F^{\prime}$.
Since $\mathcal A$
has  a locally bounded approximate diagonal modulo $I$, so 
  there exists $M\in\frac{\mathcal A}{I}\widehat{\otimes}\frac{\mathcal A}{I}$ such that
$$\hat{r}_n(M)\leq C_n  \;\;\; and \;\;\;\hat{r}_n(a.M-M.a)<\frac{\varepsilon}{3\alpha_m}\;\;\;(a\in F).$$
Also $$\hat{p}_n(a.\pi_{\frac{\mathcal A}{I}}(M) - \tilde{a})<\frac{\varepsilon}{3k_m}, \;\;\;(a\in\mathcal A, \;\; \tilde{a}=a+I).$$
Put $M^{\prime}=\bar{\varphi}\otimes \bar{\varphi}(M)\in \frac{\mathcal B}{J}\widehat{\otimes}\frac{\mathcal B}{J}.$
Therefore
 $$\hat{S}_m(M^{\prime})=\hat{S}_m(\bar{\varphi}\otimes \bar{\varphi}(M))\leq \alpha_m\hat{r}_{n_1}(M) \leq \alpha_m C_{n_1}.$$
Now take $b\in F^{\prime}$ and choose $ a\in F$ satisfying $q_m(\varphi(a)-b)<\frac{\varepsilon}{3 \alpha_m C_{n_1}}.$
Thus 
\begin{eqnarray*}
\hat{S}_m(b.M^{\prime}-M^{\prime}.b)&=&\hat{S}_m(b.\bar{\varphi}\otimes \bar{\varphi}(M)-\bar{\varphi}\otimes \bar{\varphi}(M).b)
\\
&=&\hat{S}_m(\bar{\varphi}\otimes \bar{\varphi}(a.M-M.a)+(b-\varphi(a)).M^{\prime}-M^{\prime}.(b-\varphi(a))
\\
&\leq & \hat{S}_m(\bar{\varphi}\otimes \bar{\varphi}(a.M-M.a))+\hat{S}_m((b-\varphi(a)).M^{\prime})+\hat{S}_m(M^{\prime}.(b-\varphi(a)))\\
&\leq & \alpha_m \hat{r}_{n_1}(a.M-M.a)+ 2 q_{m_0}(b-\varphi(a)) \hat{S}_{m}(M^{\prime})\\
&\leq & \alpha_m \hat{r}_{n_1}(a.M-M.a)+ 2 q_{m_0}(b-\varphi(a)) \hat{S}_{m_0}(M^{\prime})\\
&< & \alpha_m \frac{\varepsilon}{3\alpha_m}+ 2\alpha_{m_0} C_{n_1}\frac{\varepsilon}{3\alpha_{m_0} C_{n_1}}\\
&=&\varepsilon.
\end{eqnarray*}
In the above relations we can assume that $m_0> m$. In fact if $m_0\leq m$, we have 
 \begin{eqnarray*}
\hat{S}_m(b.M^{\prime}-M^{\prime}.b)&=&\hat{S}_m(b.\bar{\varphi}\otimes \bar{\varphi}(M)-\bar{\varphi}\otimes \bar{\varphi}(M).b)
\\
&=&\hat{S}_m(\bar{\varphi}\otimes \bar{\varphi}(a.M-M.a)+(b-\varphi(a)).M^{\prime}-M^{\prime}.(b-\varphi(a))
\\
&\leq & \hat{S}_m(\bar{\varphi}\otimes \bar{\varphi}(a.M-M.a))+\hat{S}_m((b-\varphi(a)).M^{\prime})+\hat{S}_m(M^{\prime}.(b-\varphi(a)))\\
&\leq & \alpha_m \hat{r}_{n_1}(a.M-M.a)+ 2 q_{m_0}(b-\varphi(a)) \hat{S}_{m}(M^{\prime})\\
&\leq & \alpha_m \hat{r}_{n_1}(a.M-M.a)+ 2 q_{m}(b-\varphi(a)) \hat{S}_{m}(M^{\prime})\\
&\leq & \alpha_m \frac{\varepsilon}{3\alpha_m}+ 2\alpha_{m} C_{n_1}\frac{\varepsilon}{3\alpha_{m} C_{n_1}}\\
&=&\varepsilon.
\end{eqnarray*}
Furthermore $\pi_{\frac{\mathcal B}{J}}$ is a $\mathcal B$-bimodule
homomorphism and the quotient map is seminorms decreasing, so
\begin{eqnarray*}
\hat{q}_m(b\cdot\pi_{\frac{\mathcal B}{J}}(M^{\prime})-\tilde{b}) &\leq& \hat{q}_m((b-{\varphi}(a))\cdot\pi_{\frac{\mathcal B}{J}}(M^{\prime}))+
\hat{q}_m(\bar{\varphi}(a\cdot\pi_{\frac{\mathcal A}{I}}(M)-\tilde{a}))+\hat{q}_{m}(\bar{\varphi}(\tilde{a})-\tilde{b})
\\
&=&\hat{q}_m(\pi_{\frac{\mathcal B}{J}}((b-\varphi(a)).M^{\prime}))+
\hat{q}_m(\bar{\varphi}(a.\pi_{\frac{\mathcal A}{I}}(M)-\tilde{a}))+\hat{q}_m(\bar{\varphi}(\tilde{a})-\tilde{b})\\
&\leq& \hat{S}_m((b-{\varphi}(a)).M^{\prime})+
\hat{q}_m(\bar{\varphi}(a.\pi_{\frac{\mathcal A}{I}}(M)-\tilde{a}))+\hat{q}_m(\bar{\varphi}(\tilde{a})-\tilde{b})\\
&\leq &\hat{S}_m(M^{\prime})q_{m_0}(b-\varphi(a))+k_m \hat{p}_{n_0}(a.\pi_{\frac{\mathcal A}{I}}(M)-\tilde{a})+q_m(\varphi(a)-b)
\\
&\leq & \hat{S}_m(M^{\prime})q_{m_0}(b-\varphi(a))+ k_m \frac{\varepsilon}{3 k_m}+\frac{\varepsilon}{3 \alpha_{m} C_{n_1}} \\
&\leq & \hat{S}_{m_0}(M^{\prime})q_{m_0}(b-\varphi(a))+ k_m \frac{\varepsilon}{3 k_m}+\frac{\varepsilon}{3 \alpha_{m} C_{n_1}} \\
&\leq & \alpha_{m_0} C_{n_1} \frac{\varepsilon}{3 \alpha_{m_0} C_{n_1}}+ k_m \frac{\varepsilon}{3 k_m}+\frac{\varepsilon}{3 \alpha_{m} C_{n_1}}  \\
&=&\varepsilon,
\end{eqnarray*}
for each $b\in F^{'}$ and $\tilde{b}=b+J$. Note that in the above relations we
can assume that, $m_0>m$.
Henceforth $\mathcal B$ has a locally bounded approximate diagonal modulo $J$.
\end{proof}

\begin{proposition}\label{p10}
Let $(\mathcal A,p_n)$ be a  Fr$\acute{e}$chet algebra and $I$ be a closed ideal of
 $\mathcal A$. $\mathcal A$ has a bounded approximate diagonal modulo $I$ 
 if and only if there exists a bounded set $B\subseteq \frac{\mathcal A}{I}\widehat{\otimes}  \frac{\mathcal A}{I}$
such that   for each finite set $F\subseteq \mathcal A\setminus I$,
 each  $n\in \mathbb{N}$, and each $\varepsilon>0$ 
there exists $M\in (\frac{\mathcal A}{I}\widehat{\otimes}  \frac{\mathcal A}{I}, \hat{r}_n)$  such that
$$\hat{r}_n(a.M-M.a)<\varepsilon,\;\;\;(a\in F),$$
$$\hat{p}_n(a.\pi_{\frac{\mathcal A}{I}}(M)-\tilde{a})<\varepsilon,\;\;\;\;(a\in\mathcal A, \;\tilde{a}=a+I). $$
\end{proposition}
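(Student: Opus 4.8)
\emph{Proof proposal.} The plan is to prove the equivalence by the standard reindexing device that converts a bounded net into a bounded set carrying a finite-subset approximation property, and conversely; the new index set will be the directed set of triples $(F,n,\varepsilon)$. A fact I will use repeatedly is that the seminorms are increasing in $n$: since $p_n\le p_{n+1}$ one gets $r_n\le r_{n+1}$ on $\mathcal A\widehat{\otimes}\mathcal A$, and passing to the quotient preserves monotonicity, so $\hat p_n\le\hat p_{n+1}$ on $\frac{\mathcal A}{I}$ and $\hat r_n\le\hat r_{n+1}$ on $\frac{\mathcal A}{I}\widehat{\otimes}\frac{\mathcal A}{I}$.

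For the forward implication, assume $(m_\beta)_\beta$ is a bounded approximate diagonal modulo $I$ and put $B:=\{m_\beta:\beta\}$, which is bounded because the net is. Given a finite $F\subseteq\mathcal A\setminus I$, an $n\in\mathbb N$ and $\varepsilon>0$, I would invoke the defining limits of Definition~\ref{D5}: for each fixed $a$ one has $\hat r_n(a.m_\beta-m_\beta.a)\to0$ and $\hat p_n(a.\pi_{\frac{\mathcal A}{I}}(m_\beta)-\tilde a)\to0$. As $F$ is finite and the index set directed, a single tail index $\beta_0$ serves all $a\in F$ simultaneously for both estimates, and $M:=m_{\beta_0}\in B$ is the desired element.

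For the converse, let $B$ be the bounded set furnished by the hypothesis and let $\Gamma$ be the set of triples $\gamma=(F,n,\varepsilon)$ with $F\subseteq\mathcal A\setminus I$ finite, $n\in\mathbb N$ and $\varepsilon>0$, directed by $(F_1,n_1,\varepsilon_1)\preceq(F_2,n_2,\varepsilon_2)$ iff $F_1\subseteq F_2$, $n_1\le n_2$ and $\varepsilon_1\ge\varepsilon_2$. For each $\gamma$ I would select $m_\gamma\in B$ satisfying $\hat r_n(a.m_\gamma-m_\gamma.a)<\varepsilon$ and $\hat p_n(a.\pi_{\frac{\mathcal A}{I}}(m_\gamma)-\tilde a)<\varepsilon$ for all $a\in F$; the resulting net lies in $B$ and is therefore bounded. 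To check the commutator limit, fix $a_0\in\mathcal A\setminus I$, $n_0\in\mathbb N$, $\varepsilon_0>0$ and set $\gamma_0=(\{a_0\},n_0,\varepsilon_0)$: every $\gamma=(F,n,\varepsilon)\succeq\gamma_0$ has $a_0\in F$, $n\ge n_0$ and $\varepsilon\le\varepsilon_0$, so monotonicity gives $\hat r_{n_0}(a_0.m_\gamma-m_\gamma.a_0)\le\hat r_n(a_0.m_\gamma-m_\gamma.a_0)<\varepsilon_0$. The same choice handles the diagonal limit for $a_0\in\mathcal A\setminus I$, while for $a_0\in I$ it is automatic, since then $\tilde a_0=0$ and, $I$ being an ideal, $a_0.\pi_{\frac{\mathcal A}{I}}(m_\gamma)=0$. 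Hence $(m_\gamma)_\gamma$ is a bounded approximate diagonal modulo $I$.

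The one point I expect to require genuine care, rather than mere bookkeeping, is this passage from the level-dependent seminorm $\hat r_n$ (respectively $\hat p_n$) attached to the index $\gamma$ to the fixed seminorm $\hat r_{n_0}$ (respectively $\hat p_{n_0}$) in which convergence is actually measured; it is precisely the monotonicity $n\mapsto\hat r_n$, together with the observation that the finite sets $F$ exhaust $\mathcal A\setminus I$ so that every relevant $a$ is eventually captured, that makes the reindexing legitimate. Everything else reduces to routine verification that $\Gamma$ is directed and that a bounded set yields a bounded net.
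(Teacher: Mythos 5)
Your proposal is correct and follows essentially the same route as the paper's proof: the forward direction takes $B$ to be the range of the net and uses a common tail index for the finite set $F$, and the converse reindexes over the directed set of triples $(F,n,\varepsilon)$ and exploits the monotonicity $\hat r_{n_0}\le\hat r_n$, $\hat p_{n_0}\le\hat p_n$ to pass to a fixed seminorm. The only difference is cosmetic: you make explicit the (harmless, and in fact already subsumed by the hypothesis holding for all $a\in\mathcal A$) case $a_0\in I$ in the diagonal limit, which the paper leaves implicit.
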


\begin{proof}
First suppose that $(m_{\alpha})_{\alpha}\subseteq  \frac{\mathcal A}{I}\widehat{\otimes}  \frac{\mathcal A}{I}$
is a bounded approximate diagonal modulo $I$ for $\mathcal A$.
So for   each  $n\in \mathbb{N}$, and each $\varepsilon>0$ 
there exists $\alpha_0$ such that for each $\alpha\geq \alpha_0$ we have 
$$\hat{p}_n(a.\pi_{\frac{\mathcal A}{I}}(m_{\alpha})-\tilde{a})<\varepsilon,\;\;\;\;(a\in\mathcal A,\tilde{a}=a+I), $$
$$\hat{r}_n(a.m_{\alpha}-m_{\alpha}.a)<\varepsilon\;\;\;\;(a\in \mathcal A\setminus  I).$$
Now  set $B=(m_{\alpha})_{\alpha}\subseteq  \frac{\mathcal A}{I}\widehat{\otimes}  \frac{\mathcal A}{I}$.
Let $n\in \mathbb{N}$, $\varepsilon>0$  and $F\subseteq \mathcal A\setminus I$,
be a finite set. So there exists $M=m_{\alpha}$ which ${\alpha\geq \alpha_0}$
such that 
$$\hat{r}_n(a.M-M.a)<\varepsilon,\;\;\;(a\in F),$$
$$\hat{p}_n(a.\pi_{\frac{\mathcal A}{I}}(M)-\tilde{a})<\varepsilon,\;\;\;\;(a\in\mathcal A, \;\tilde{a}=a+I). $$
Conversely, suppose that there exists a bounded set $B\subseteq \frac{\mathcal A}{I}\widehat{\otimes}  \frac{\mathcal A}{I}$
such that   for each finite set $F\subseteq \mathcal A\setminus I$,
 each  $n\in \mathbb{N}$, and each $\varepsilon>0$ 
there exists $M\in (\frac{\mathcal A}{I}\widehat{\otimes}  \frac{\mathcal A}{I}, \hat{r}_n)$  such that
$$\hat{r}_n(a.M-M.a)<\varepsilon,\;\;\;(a\in F),$$
$$\hat{p}_n(a.\pi_{\frac{\mathcal A}{I}}(M)-\tilde{a})<\varepsilon,\;\;\;\;(a\in\mathcal A, \;\tilde{a}=a+I). $$
Now take 
$$S=\{(n,\varepsilon, F):\;\;n\in\mathbb{N},\;\;\varepsilon>0\;\;\; and\;\;\; F\subseteq \mathcal A\setminus I \;\;  is \;\;a\;\; finite\;\; set  \}.$$
So $S$ is a directed set as follows:
$$(n_1,\varepsilon_1, F_1)\leq (n_2,\varepsilon_2, F_2)\;\;\Leftrightarrow\;\;n_1\leq n_2\;\;\; and\;\;\varepsilon_2\leq\varepsilon_1\;\;\; and \;\;F_1\subseteq  F_2.$$
Therefore there exists a bounded net 
 $(m_{\alpha})\subseteq B\subseteq \frac{\mathcal A}{I}\widehat{\otimes}  \frac{\mathcal A}{I}$.
Furthermore let $k\in\mathbb{N}$ and $\varepsilon>0$ and the finite set $F\subseteq \mathcal A\setminus I$
be arbitrary. Therefore for each $(n,\delta, C)\geq(k,\varepsilon, F)$
we have
$$\hat{p}_{k}(a.\pi_{\frac{\mathcal A}{I}}(m_{\alpha})-\tilde{a})\leq \hat{p}_n(a.\pi_{\frac{\mathcal A}{I}}(m_{\alpha})-\tilde{a})<\delta<\varepsilon,\;\;\;\;(a\in\mathcal A, \tilde{a}=a+I), $$
$$\hat{r}_{k}(a.m_{\alpha}-m_{\alpha}.a)\leq \hat{r}_{n}(a.m_{\alpha}-m_{\alpha}.a)<\delta<\varepsilon\;\;\;\;(a\in \mathcal A\setminus  I).$$
So $\mathcal A$ has a bounded approximate diagonal modulo $I$.
\end{proof}

An immediate consequence of Definition \ref{D5} and Proposition \ref{p10} is the
following.
\begin{remark}\label{r5}
For a Banach algebra  the notions of locally 
bounded and bounded approximate diagonal modulo an ideal  are equivalent.
\end{remark}
Let $(\mathcal A,p_n)$ be a  Fr$\acute{e}$chet algebra and $I$ be a closed ideal of
 $\mathcal A$. As we have already mentioned $(\frac{\mathcal A}{I}, \hat{p}_n)$ and 
 $(\frac{\mathcal A}{I}\widehat{\otimes}\frac{\mathcal A}{I}, \hat{r}_n)$ are   Fr$\acute{e}$chet algebras
and for each $n\in\mathbb{N}$, $(\mathcal A_{n} , \|\cdot \|_n)$
 is a Banach algebra where $\|\varphi_n(a){\|}_n=\|a+\ker p_n\|_n=p_n(a)$. Also
  $(\frac{\mathcal A_{n}}{\overline{I_n}}, \|\cdot\hat{\|}_{n})$ is a Banach algebra, 
 where  $$\|\varphi_n(a)+\overline{I_n}\hat{\|}_{n}=\inf\{\|\varphi_n(a)+b\|_n:\;\;\; b\in \overline{I_n}\},$$
and $(\frac{\mathcal A_{n}}{\overline{I_n}}\widehat\otimes\frac{\mathcal A_{n}}{\overline{I_n}}, |\|\cdot|\|_n)$ is a 
Banach algebra, where $|\|\cdot|\|_n$, is the projective tensor norm of Banach algebras 
  $(\frac{\mathcal A_{n}}{\overline{I_n}}, \|\cdot\hat{\|}_{n})$.
 On the other hand 
\begin{eqnarray*}
\hat{p}_n(a+I)&=&
\inf\{p_n(a+y):\;\;\;y\in I\}\\
&=& \inf\{\|\varphi_n(a+y)\|_n:\;\;\;y\in I\}\\
&=& \inf\{\|\varphi_n(a)+\varphi_n(y)\|_n:\;\;\;y\in I\}\\
&=& \inf\{\|\varphi_n(a)+z\|_n:\;\;\;z\in { I_n}\}\\
&\geq & \inf\{\|\varphi_n(a)+z\|_n:\;\;\;z\in\overline{ I_n}\}\\
&=&\|\varphi_n(a)+\overline{ I_n}\hat{\|}_n.
\end{eqnarray*}
Also if $z\in\overline{ I_n}$, then there is a sequence $(z_{m,n})_{m\in\mathbb{N}}\in I_n$
such that $z=\lim_m z_{m,n}$. Now we have 
\begin{eqnarray*}
\|\varphi_n(a)+ z\hat{\|}_n&=&
\lim_m \|\varphi_n(a)+ z_{m,n}\hat{\|}_n \\
&\geq & \inf_m \|\varphi_n(a)+ z_{m,n}\hat{\|}_n\\
&\geq & \inf\{\|\varphi_n(a)+y_n\|_n:\;\;\;y_n\in I_n\}\\
&= &  \inf\{\|\varphi_n(a)+\varphi_n(y)\|_n:\;\;\;y\in I\}\\
&= &  \inf\{\|\varphi_n(a+y)\|_n:\;\;\;y\in I\}\\
&=& \inf\{p_n(a+y):\;\;\;y\in I\}\\
&=&\hat{p}_n(a+I).
\end{eqnarray*}
Therefore $\hat{p}_n(a+I)\leq \|\varphi_n(a)+ \overline{I_n}\hat{\|}_n$,
and so $\hat{p}_n(a+I)= \|\varphi_n(a)+ \overline{I_n}\hat{\|}_n$. 

Also
\begin{eqnarray*}
\hat{r}_n(M)&=&\inf\{\sum_{i=1}^{\infty}\hat{p}_n(a_i+I)\hat{p}_n(b_i+I):\;\;M=\sum_{i=1}^{\infty}(a_i+I)\otimes(b_i+I),\;\;
M\in \frac{\mathcal A}{I}\widehat{\otimes}\frac{\mathcal A}{I}\}\\
&=&\inf\{\sum_{i=1}^{\infty}\|\varphi_n(a_i)+\overline{ I_n}\hat{\|}_n\|\varphi_n(b_i)+\overline{ I_n}\hat{\|}_n:\;\;M=\sum_{i=1}^{\infty}(a_i+I)\otimes(b_i+I),\;\;
M\in \frac{\mathcal A}{I}\widehat{\otimes}\frac{\mathcal A}{I}\}\\
&=&|\|\bar{\varphi}_n\otimes\bar{\varphi}_n(M)\||_n,
\end{eqnarray*}
Where $\varphi_n$ stands for the canonical map from $\mathcal A$ into 
$\mathcal A_n$ and $\bar{\varphi}_n$ is defined from  $\frac{\mathcal A}{I}$
into $\frac{\mathcal A_n}{\overline{ I_n}}$.
\begin{proposition}\label{p8}
Let $(\mathcal A,p_n)$ be a  Fr$\acute{e}$chet algebra and $I$ be a closed ideal of
 $\mathcal A$. Then the following conditions are equivalent:
\begin{enumerate}
\item[(i)]   $\mathcal A$  has a locally bounded approximate diagonal modulo $I$.
 \item[(ii)]
For each Banach algebra $\mathcal B$ and each continuous homomorphism 
$\varphi:\mathcal A\to\mathcal B$ with dense range,  the Banach algebra $\mathcal B$ has a  bounded 
approximate diagonal modulo $J$, where $J$ is a closed ideal of $\mathcal B$
 such that $\varphi(I)\subseteq J$.
\end{enumerate}
\end{proposition}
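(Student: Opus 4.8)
The plan is to prove both implications by transporting everything through the Arens--Michael decomposition $\mathcal A=\underleftarrow{\lim}\mathcal A_n$, using three ingredients already in hand: Theorem \ref{t5}, Remark \ref{r5}, and the two isometric identifications established just above the statement, namely $\hat{p}_n(a+I)=\|\varphi_n(a)+\overline{I_n}\hat{\|}_n$ and $\hat{r}_n(M)=|\|\bar{\varphi}_n\otimes\bar{\varphi}_n(M)\||_n$.

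For (i) $\Rightarrow$ (ii) there is almost nothing to do. Given a Banach algebra $\mathcal B$, a continuous homomorphism $\varphi:\mathcal A\to\mathcal B$ with dense range, and a closed ideal $J$ of $\mathcal B$ with $\varphi(I)\subseteq J$, I would regard $\mathcal B$ as a Fr\'echet algebra carrying the single seminorm $\|\cdot\|$ and apply Theorem \ref{t5} directly: it gives that $\mathcal B$ has a locally bounded approximate diagonal modulo $J$. Since $\mathcal B$ is a Banach algebra, Remark \ref{r5} immediately upgrades this to a bounded approximate diagonal modulo $J$, which is (ii).

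For (ii) $\Rightarrow$ (i) the idea is to feed (ii) the canonical maps of the Arens--Michael decomposition. For each $n$ the map $\varphi_n:\mathcal A\to\mathcal A_n$ has dense range, $\overline{I_n}$ is a closed ideal of $\mathcal A_n$, and $\varphi_n(I)\subseteq\overline{I_n}$; so (ii) applied with $\mathcal B=\mathcal A_n$ and $J=\overline{I_n}$ yields a bounded approximate diagonal modulo $\overline{I_n}$ for $\mathcal A_n$, say with bound $C_n$. I would then claim that $\{C_n:n\in\mathbb N\}$ (enlarged so that each $C_n>1$) is the required family of constants. Fixing a finite $F\subseteq\mathcal A\setminus I$, an index $n$, and $\varepsilon>0$, Remark \ref{r5} and Proposition \ref{p10} recast the $\mathcal A_n$-diagonal in local form: there is $M''\in\frac{\mathcal A_n}{\overline{I_n}}\widehat{\otimes}\frac{\mathcal A_n}{\overline{I_n}}$ with $|\|M''\||_n\le C_n$, with $|\|c.M''-M''.c\||_n$ small for $c$ in the finite set $\varphi_n(F)\setminus\overline{I_n}$, and with $\|c.\pi_{\frac{\mathcal A_n}{\overline{I_n}}}(M'')-\tilde c\hat{\|}_n$ small for $c\in\varphi_n(F)$. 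Two observations make the translation exact. First, if $\varphi_n(a)\in\overline{I_n}$ then $\varphi_n(a)$ annihilates $\frac{\mathcal A_n}{\overline{I_n}}$ on both sides, so the commutator $\varphi_n(a).M''-M''.\varphi_n(a)$ vanishes identically and the commutator estimate is automatic for those $a\in F$. Second, the bimodule identity $\bar{\varphi}_n\otimes\bar{\varphi}_n(a.M)=\varphi_n(a).(\bar{\varphi}_n\otimes\bar{\varphi}_n(M))$ and the intertwining $\bar{\varphi}_n\circ\pi_{\frac{\mathcal A}{I}}=\pi_{\frac{\mathcal A_n}{\overline{I_n}}}\circ(\bar{\varphi}_n\otimes\bar{\varphi}_n)$, combined with the two isometric identifications, turn each condition of Definition \ref{D5} for a candidate $M\in\frac{\mathcal A}{I}\widehat{\otimes}\frac{\mathcal A}{I}$ into exactly the corresponding estimate for $\bar{\varphi}_n\otimes\bar{\varphi}_n(M)$ computed in $\mathcal A_n$.

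The remaining and genuinely delicate step is to produce $M$ at the Fr\'echet level. Since $\varphi_n$ has dense range, so does $\bar{\varphi}_n:\frac{\mathcal A}{I}\to\frac{\mathcal A_n}{\overline{I_n}}$, and therefore $\bar{\varphi}_n\otimes\bar{\varphi}_n$ has dense range in $\frac{\mathcal A_n}{\overline{I_n}}\widehat{\otimes}\frac{\mathcal A_n}{\overline{I_n}}$. I would choose $M$ with $|\|\bar{\varphi}_n\otimes\bar{\varphi}_n(M)-M''\||_n$ as small as needed; by $\hat{r}_n(M)=|\|\bar{\varphi}_n\otimes\bar{\varphi}_n(M)\||_n$ this keeps $\hat{r}_n(M)\le C_n$ once the approximation error is absorbed into the slack $C_n>1$, and the joint continuity of the module actions and of $\pi$, used only over the finite set $F$, propagates the smallness of the error to the commutator and $\pi$-estimates. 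This yields $\hat{r}_n(a.M-M.a)<\varepsilon$ for $a\in F$ together with the required $\pi$-estimate, i.e.\ the conditions of Definition \ref{D5}. I expect this transfer to be the main obstacle: the diagonals handed back by (ii) live in the completed Banach tensor products $\frac{\mathcal A_n}{\overline{I_n}}\widehat{\otimes}\frac{\mathcal A_n}{\overline{I_n}}$, whereas Definition \ref{D5} demands witnesses in the Fr\'echet tensor product $\frac{\mathcal A}{I}\widehat{\otimes}\frac{\mathcal A}{I}$, and it is precisely the density of $\bar{\varphi}_n\otimes\bar{\varphi}_n$ together with the isometric identification of $\hat{r}_n$ that must be marshalled, while checking that the approximation error spoils neither the bound $C_n$ nor the finitely many commutator and $\pi$-estimates.
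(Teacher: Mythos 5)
Your proposal follows the paper's proof essentially verbatim: (i)$\Rightarrow$(ii) is exactly Theorem~\ref{t5} combined with Remark~\ref{r5} after viewing $\mathcal B$ as a Fr\'echet algebra with a single norm, and (ii)$\Rightarrow$(i) likewise feeds the canonical maps $\varphi_n$ of the Arens--Michael decomposition into (ii), converts the resulting bounded approximate diagonals of $\mathcal A_n$ modulo $\overline{I_n}$ into local form via Proposition~\ref{p10}, and pulls them back through the dense-range map $\bar{\varphi}_n\otimes\bar{\varphi}_n$ using the isometric identifications $\hat{p}_n(a+I)=\|\varphi_n(a)+\overline{I_n}\hat{\|}_n$ and $\hat{r}_n(M)=|\|\bar{\varphi}_n\otimes\bar{\varphi}_n(M)\||_n$. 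The only cosmetic difference is that the paper disposes of elements $a\in F$ with $\varphi_n(a)\in\overline{I_n}$ by enlarging $n$ until $\varphi_n(F)\subseteq\mathcal A_n\setminus\overline{I_n}$, whereas you observe that such elements act as zero on $\frac{\mathcal A_n}{\overline{I_n}}\widehat{\otimes}\frac{\mathcal A_n}{\overline{I_n}}$ so their commutators vanish; both work.
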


\begin{proof}
$(i)\Rightarrow (ii)$. It is straightforward from Theorem \ref{t5} and Remark \ref{r5}.\\
$(ii)\Rightarrow (i)$. Let $n\in\mathbb{N}$ be arbitrary and $\mathcal A=\underleftarrow{\lim}\mathcal A_n$
be an Arens-Michael decomposition of $\mathcal A$ and $I=\underleftarrow{\lim}\overline{I_n}$ be an 
Arens-Michael decomposition of $I$.  Since $\varphi_n:\mathcal A\to\mathcal A_n$
is a continuous homomorphism with dense range, by assumption the Banach algebra $\mathcal A_n$
has a bounded approximate diagonal modulo $\overline{I_n}$. 
Therefore by Proposition \ref{p10}, there exists  $C_n>0$ such that for each finite set 
$F_n\subseteq {\mathcal A_{n}}\setminus{ \overline{I_n}}$ and each
$\varepsilon>0$ there exists  
$M^{'}\in\frac{\mathcal A_{n}}{ \overline{I_n}}\widehat{\otimes}\frac{\mathcal A_{n}}{ \overline{I_n}}$
for which
$$|\| M^{'}\||_n\leq C_n$$
and
$$|\|b. M^{'}-M^{'}.b\||_n<\frac{\varepsilon}{3}\;\;\;, (b\in F_n)$$
and
$$\|b. \pi_{\frac{\mathcal A_{n}}{ \overline{I_n}}}(M^{'})-\tilde{b}\hat{\|}_n<\frac{\varepsilon}{3},\;\;\; (b\in {\mathcal A}_n, \tilde{b}=b+ \overline{I_n}).$$
Suppose that $q_{\mathcal A_{n}}:\mathcal A_{n}\to {\frac{\mathcal A_{n}}{ \overline{I_n}}}$ be
the quotient map. Then $q_{\mathcal A_{n}}\circ \varphi_n:\mathcal A\to {\frac{\mathcal A_{n}}{ \overline{I_n}}}$ 
defined by $q_{\mathcal A_{n}}\circ \varphi_n (a)=\varphi_n(a)+{ \overline{I_n}}$ is a continuous homomorphism
with dense range and $I\subseteq \ker(q_{\mathcal A_{n}}\circ \varphi_n)$.
Therefore we can define a continuous homomorphism with dense range 
$\bar{\varphi}_n:\frac{\mathcal A}{I}\to \frac{\mathcal A_{n}}{ \overline{I_n}}$ 
such that $\bar{\varphi}_n(a+I)={\varphi}_n(a)+\overline{I_n}$ and so
$\bar{\varphi}_n\otimes\bar{\varphi}_n:\frac{\mathcal A}{I}\widehat{\otimes}\frac{\mathcal A}{I}\to \frac{\mathcal A_{n}}{ \overline{I_n}}\widehat{\otimes} \frac{\mathcal A_{n}}{ \overline{I_n}}$
is a continuous homomorphism with dense range.
If $F=\{a_1, a_2, ... , a_m\}\subseteq  \mathcal A\setminus I$  is a finite set, then there exists $n_0\in\mathbb{N}$,
such that $a_n^i\notin\overline{I_n}$ 
for each $n\geq n_0$, where $a_i=(a_n^i)$ and $1\leq i\leq m$. In fact if $a_i\notin I$, then there exists 
$n_i\in\mathbb{N}$, such that $a_{n_i}^i\notin  \overline{I_{n_i}}$. Also the maps 
$\varphi_{mn}:\mathcal A_m\to \mathcal A_n$ defined by $\varphi_{mn}(a_m)=a_n$, 
for each $m\geq n$. So $a_n^i\notin \overline{I_{n}}$, for each $n\geq n_i$. Now put 
$$n_0=\max\{ n_i:\;\; 1\leq i\leq m\}.$$
Thus  $a_n\notin\overline{I_n}$, for each $n\geq n_0$.
In the sequel take a
 finite set $F\subseteq \mathcal A\setminus I$, $\varepsilon>0$ 
and  $k\in\mathbb{N}$.  Also set 
$$n\geq\max \{n_0,k\}.$$
Because of the above argument we can assume that
 $F^{\prime}=\{\varphi_n(a):\,\, a\in F\}\subseteq \mathcal A_n\setminus \overline{I_n}$
  is a finite set.
Put $$C=\max\{\|\varphi_n(a)\|_n:\;\;a\in F\}.$$
Since  $\bar{\varphi}_n\otimes\bar{\varphi}_n $
 is a continuous homomorphism with dense range, so there exists $M\in \frac{\mathcal A}{I}\hat{\otimes}\frac{\mathcal A}{I}$
such that  
$$\||\bar{\varphi}_n\otimes\bar{\varphi}_n(M)-M^{\prime}\||_n<\frac{\varepsilon}{3C}.$$
Hence 
\begin{eqnarray*}
\hat{r}_k(a.M-M.a)&\leq&\hat{r}_n(a.M-M.a)\\&=&
\||\bar{\varphi}_n\otimes\bar{\varphi}_n(a.M-M.a)\||_n\\
&=& |\|\varphi_n(a).\bar{\varphi}_n\otimes\bar{\varphi}_n(M)-\bar{\varphi}_n\otimes\bar{\varphi}_n(M).\varphi_n(a)\||_n\\
&\leq&| \|\varphi_n(a).\bar{\varphi}_n\otimes\bar{\varphi}_n(M)-\varphi_n(a).M^{\prime}\||_n\\
&+&|\|M^{\prime}.\varphi_n(a)- \varphi_n(a).M^{\prime}|\|_n\\
&+&|\|M^{\prime}.\varphi_n(a) -\bar{\varphi}_n\otimes\bar{\varphi}_n(M).\varphi_n(a)\||_n\\
&\leq& \|\varphi_n(a)\|_n\||\bar{\varphi}_n\otimes\bar{\varphi}_n(M)-\varphi_n(a).M^{\prime}\||_n\\
&+&|\|M^{\prime}.\varphi_n(a)- \varphi_n(a).M^{\prime}|\|_n\\
&+&\|\varphi_n(a)\|_n\||\bar{\varphi}_n\otimes\bar{\varphi}_n(M)-\varphi_n(a).M^{\prime}\||_n\\
&<& 2C\frac{\varepsilon}{3C}+\frac{\varepsilon}{3}=\varepsilon.
\end{eqnarray*}
Furthermore 
\begin{eqnarray*}
\hat{p}_k(a.\pi_{\frac{\mathcal A}{I}}(M)-\tilde{a})&\leq&\hat{p}_n(a.\pi_{\frac{\mathcal A}{I}}(M)-\tilde{a})\\&=&
\|\bar{\varphi}_n(a.\pi_{\frac{\mathcal A}{I}}(M)-\tilde{a})\hat{\|}_n\\
&=&\|\varphi_n(a)\pi_{\frac{\mathcal A_n}{\overline{I_n}}}(\bar{\varphi}_n\otimes\bar{\varphi}_n(M))-\bar{\varphi}_n(\tilde{a})\hat{\|}_n\\
&=&\|\varphi_n(a)\pi_{\frac{\mathcal A_n}{\overline{I_n}}}(\bar{\varphi}_n\otimes\bar{\varphi}_n(M))-\bar{\varphi}_n(a+I)\hat{\|}_n\\
&=&\|\varphi_n(a)\pi_{\frac{\mathcal A_n}{\overline{I_n}}}(\bar{\varphi}_n\otimes\bar{\varphi}_n(M))-(\varphi_n(a)+\overline{I_n})\hat{\|}_n\\
&=&\|\varphi_n(a)\pi_{\frac{\mathcal A_n}{\overline{I_n}}}(\bar{\varphi}_n\otimes\bar{\varphi}_n(M))-\widetilde{\varphi_n(a)}\hat{\|}_n\\
&\leq& \|\varphi_n(a)\|_n\|\pi_{\frac{\mathcal A_n}{\overline{I_n}}}(\bar{\varphi}_n\otimes\bar{\varphi}_n(M)-M^{\prime})\hat{\|}_n\\
&+&  \|\varphi_n(a)\pi_{\frac{\mathcal A_n}{\overline{I_n}}}(M^{\prime})-\widetilde{\varphi_n(a)})\hat{\|}_n\\
&\leq& \|\varphi_n(a)\|_n|\|\bar{\varphi}_n\otimes\bar{\varphi}_n(M)-M^{\prime}|\|_n\\
&+&\|\varphi_n(a)\pi_{\frac{\mathcal A_n}{\overline{I_n}}}(M^{\prime})-\widetilde{\varphi_n(a)})\hat{\|}_n\\
&\leq& C |\|\bar{\varphi}_n\otimes\bar{\varphi}_n(M)-M^{\prime}|\|_n+\frac{\varepsilon}{3}\\
&\leq& C \frac{\varepsilon}{3C}+\frac{\varepsilon}{3}\\
&<&\varepsilon.
\end{eqnarray*}
Which in view  of  Definition \ref{D5}, this completes the proof.
\end{proof}

\begin{corollary}\label{Cor6}
Let $(\mathcal A,p_n)$ be a  Fr$\acute{e}$chet algebra and $I$ be a closed ideal of
 $\mathcal A$. Suppose that $\mathcal A=\underleftarrow{\lim}\mathcal A_n$
is an Arens-Michael decomposition of $\mathcal A$ and $I=\underleftarrow{\lim}\overline{I_n}$ is an 
Arens-Michael decomposition of $I$. Then $\mathcal A$ 
has a locally bounded approximate diagonal  modulo $I$ if and only if
each  $\mathcal A_n$ has a  bounded approximate diagonal modulo $\overline{I_n}$. 
\end{corollary}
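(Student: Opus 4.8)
The plan is to read off both implications from Proposition \ref{p8}, exploiting that for each $n\in\mathbb N$ the canonical map $\varphi_n:\mathcal A\to\mathcal A_n$ is a continuous homomorphism with dense range satisfying $\varphi_n(I)\subseteq I_n\subseteq\overline{I_n}$; thus each pair $(\mathcal A_n,\overline{I_n})$ is a legitimate instance of the data $(\mathcal B,J)$ occurring in condition (ii) of that proposition. For the forward implication, assuming $\mathcal A$ has a locally bounded approximate diagonal modulo $I$, I would fix $n$ and apply Proposition \ref{p8}, $(i)\Rightarrow(ii)$, to the Banach algebra $\mathcal B=\mathcal A_n$, the homomorphism $\varphi=\varphi_n$, and the closed ideal $J=\overline{I_n}$; this yields at once that $\mathcal A_n$ has a bounded approximate diagonal modulo $\overline{I_n}$, and since $n$ is arbitrary this direction is complete.

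The reverse implication is the delicate one, because its hypothesis supplies bounded approximate diagonals only for the specific algebras $\mathcal A_n$ of the Arens--Michael decomposition, whereas condition (ii) of Proposition \ref{p8} formally demands them for every Banach target. The key observation is that the proof of the implication $(ii)\Rightarrow(i)$ in Proposition \ref{p8} never invokes any Banach algebra other than the $\mathcal A_n$ reached through the canonical maps $\varphi_n$; hence the weaker hypothesis of the corollary is precisely what that argument consumes. Concretely, I would take the constants furnished by Proposition \ref{p10} (applied to each $\mathcal A_n$ modulo $\overline{I_n}$) as the family $\{C_n:n\in\mathbb N\}$ of Definition \ref{D5}; then, given a finite set $F\subseteq\mathcal A\setminus I$, an index $k$, and $\varepsilon>0$, I would choose a witness $M'\in\frac{\mathcal A_n}{\overline{I_n}}\widehat{\otimes}\frac{\mathcal A_n}{\overline{I_n}}$ for the bounded approximate diagonal of $\mathcal A_n$, lift it through the dense-range homomorphism $\bar\varphi_n\otimes\bar\varphi_n$ to an element $M$ of $\frac{\mathcal A}{I}\widehat{\otimes}\frac{\mathcal A}{I}$ close to $M'$, and transport the three required estimates back using the monotonicity $\hat r_k\le\hat r_n$, $\hat p_k\le\hat p_n$ together with the identity $\hat r_n(\,\cdot\,)=|\|\bar\varphi_n\otimes\bar\varphi_n(\,\cdot\,)\||_n$ recorded immediately before Proposition \ref{p8}.

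The step I expect to be the main obstacle is arranging that the chosen index $n$ pushes the finite set $F$ entirely outside $\overline{I_n}$, so that $\{\varphi_n(a):a\in F\}$ is a genuine finite subset of $\mathcal A_n\setminus\overline{I_n}$ and the bounded approximate diagonal of $\mathcal A_n$ may legitimately be evaluated against it. This is the compatibility between the quotient ``modulo $I$'' and the inverse-limit structure, and it rests on the fact that if $a\notin I$ then $\varphi_n(a)\notin\overline{I_n}$ for all sufficiently large $n$: for each $a\in F$ one selects an index past which its image remains outside $\overline{I_n}$ (propagating forward along the linking maps $\varphi_{mn}$), and then sets $n\ge\max\{k,n_0\}$ where $n_0$ is the maximum of these finitely many indices. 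Once this is in place, the remaining inequalities reduce to routine triangle-inequality estimates built on the submultiplicativity of the module actions, and the conclusion of Definition \ref{D5} follows.
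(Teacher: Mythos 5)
Your proposal is correct, and the forward implication coincides with the paper's (the paper cites Theorem \ref{t5} together with Remark \ref{r5}, which is exactly the content of Proposition \ref{p8}, $(i)\Rightarrow(ii)$, specialized to $\mathcal B=\mathcal A_n$, $\varphi=\varphi_n$, $J=\overline{I_n}$). The converse, however, is handled by a genuinely different route. You observe that the proof of Proposition \ref{p8}, $(ii)\Rightarrow(i)$, only ever consumes the instances $(\mathcal A_n,\overline{I_n})$ reached through the canonical maps, so the corollary's weaker hypothesis already feeds that argument; you then re-run it, including the key step of pushing the finite set $F$ outside $\overline{I_n}$ for large $n$. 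The paper instead upgrades the hypothesis to the \emph{full} condition (ii) of Proposition \ref{p8}: given an arbitrary continuous dense-range homomorphism $\Phi:\mathcal A\to\mathcal B$ into a Banach algebra, it factors $\Phi$ through some $\mathcal A_{n_0}$ of the Arens--Michael decomposition (a standard consequence of continuity, since $\ker p_{n_0}\subseteq\ker\Phi$ for suitable $n_0$), transports the bounded approximate diagonal modulo $\overline{I_{n_0}}$ from $\mathcal A_{n_0}$ to $\mathcal B$ via Theorem \ref{t5} and Remark \ref{r5}, and then invokes Proposition \ref{p8}, $(ii)\Rightarrow(i)$, as a black box. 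The paper's version is more modular and avoids re-opening an earlier proof, at the cost of relying on the factorization lemma for Banach-valued homomorphisms (and of a small unstated point, namely that $\Phi_{n_0}(\overline{I_{n_0}})\subseteq\overline{\Phi(I)}\subseteq J$ because $J$ is closed); your version is self-contained modulo the internal details of Proposition \ref{p8} but is, strictly speaking, a proof by inspection of another proof rather than by citation of its statement. Both are sound.
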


\begin{proof}
Since $\varphi_n:\mathcal A\to \mathcal A_n$ is a continuous homomorphism
with dense range, Theorem \ref{t5} and Remark \ref{r5}, imply that if $\mathcal A$ has a locally
 bounded approximate diagonal modulo $I$, then $\mathcal A_n$ has a
 bounded approximate  diagonal modulo $\overline{I_n}$, for each $n\in\mathbb{N}$.
\\Conversely, suppose that each $\mathcal A_n$ has a bounded approximate diagonal
modulo $\overline{I_n}$ and  $\Phi:\mathcal A\to \mathcal B$ is a continuous 
homomorphism for some Banach algebra $\mathcal B$, 
 with dense range. Since $\mathcal B$ is a Banach algebra and $\Phi$ 
 is a continuous homomorphism, there exists a continuous homomorphism
 with dense range $\varPhi_{n_0}:\mathcal A_{n_0}\to \mathcal B$
for some $n_0\in \mathbb{N}$.
 On the other hand if $J$ is a closed ideal of $\mathcal B$
such that $\varPhi(I)\subseteq J$, then $\varPhi_{n_0}(I_{n_0})=\varPhi(I)\subseteq J$.
Therefore by Theorem \ref{t5},  $\mathcal B$ has a bounded approximate diagonal
modulo $J$ such that $\Phi(I)\subseteq J$. Thus by using Proposition \ref{p8},
$\mathcal A$ has a locally bounded approximate diagonal modulo $I$.
\end{proof}

We now state the main result of this section which 
 is an extention of corresponding result for Banach algebras.
\begin{theorem}\label{t13}
Let $(\mathcal A,p_n)$ be a  Fr$\acute{e}$chet algebra and $I$ be a closed ideal of
 $\mathcal A$. Then the following statements are equivalent;
 \begin{enumerate}
\item[(i)]   $\mathcal A$  is amenable modulo $I$.
 \item[(ii)]
 $\mathcal A$ has a locally bounded 
approximate diagonal modulo $I$.
\end{enumerate}
\end{theorem}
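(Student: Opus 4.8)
The plan is to reduce the whole statement to the Banach-algebra case, where it is already known (Amini--Rahimi, \cite{Rahimi1}) that a Banach algebra $\mathcal C$ is amenable modulo a closed ideal $K$ if and only if it has a bounded approximate diagonal modulo $K$. The two bridges to the Banach setting have already been built: Proposition \ref{p8} and Corollary \ref{Cor6}. Hence the genuine work is only to transport amenability modulo an ideal across the canonical maps $\varphi_n\colon\mathcal A\to\mathcal A_n$ and across continuous dense-range homomorphisms, and the role of the reduced inverse limit will be decisive.

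For $(i)\Rightarrow(ii)$ I would first prove a transfer lemma: if $\mathcal A$ is amenable modulo $I$ and $\varphi\colon\mathcal A\to\mathcal B$ is a continuous homomorphism with dense range into a Banach algebra $\mathcal B$ with $\varphi(I)\subseteq J$ for a closed ideal $J$, then $\mathcal B$ is amenable modulo $J$. Given a Banach $\mathcal B$-bimodule $E$ with $E.J=J.E=0$ and a continuous derivation $D\colon\mathcal B\to E^{*}$, I make $E$ into a Banach $\mathcal A$-bimodule through $\varphi$, so that $E.I=I.E=0$ because $\varphi(I)\subseteq J$, and then $D\circ\varphi$ is a continuous derivation of $\mathcal A$ into $E^{*}$. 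Amenability modulo $I$ supplies $\psi\in E^{*}$ with $(D\circ\varphi)(a)=\varphi(a).\psi-\psi.\varphi(a)$ for all $a\in\mathcal A\setminus I$. To push this to all of $\mathcal B\setminus J$ I use that, for $b\in\mathcal B\setminus J$, closedness of $J$ gives a ball about $b$ missing $J$, so density of $\varphi(\mathcal A)$ lets me choose $a_k$ with $\varphi(a_k)\to b$ and $\varphi(a_k)\notin J$; then $a_k\notin I$, and continuity of $D$ and of $\delta_\psi$ forces $D(b)=b.\psi-\psi.b$. With this lemma, the Banach equivalence shows every such $\mathcal B$ has a bounded approximate diagonal modulo $J$, and Proposition \ref{p8} then yields the locally bounded approximate diagonal modulo $I$.

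For $(ii)\Rightarrow(i)$, Corollary \ref{Cor6} together with the Banach equivalence gives that each $\mathcal A_n$ is amenable modulo $\overline{I_n}$. Now take a Banach $\mathcal A$-bimodule $E$ with $E.I=I.E=0$ and a continuous derivation $D\colon\mathcal A\to E^{*}$. Since the module norm is $m$-compatible (\cite[3.4]{Pir1}) and $D$ is continuous, I choose $n$ large enough that $\|a.x\|,\|x.a\|\le p_n(a)\|x\|$ and $\|D(a)\|\le C\,p_n(a)$ hold, and also large enough that $\overline{I_n}\neq\mathcal A_n$ (possible unless $I=\mathcal A$, a trivial case, since $I=\underleftarrow{\lim}\overline{I_n}$). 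Then $\ker p_n$ acts trivially and $E$ becomes a Banach $\mathcal A_n$-bimodule with $\overline{I_n}.E=E.\overline{I_n}=0$, while $D$ factors as $D=D_n\circ\varphi_n$ with $D_n\colon\mathcal A_n\to E^{*}$ a continuous derivation. Amenability of $\mathcal A_n$ modulo $\overline{I_n}$ then produces $\psi\in E^{*}$ implementing $D_n$ on $\mathcal A_n\setminus\overline{I_n}$, so that $D(a)=a.\psi-\psi.a$ for every $a$ in $U_n:=\varphi_n^{-1}(\mathcal A_n\setminus\overline{I_n})\subseteq\mathcal A\setminus I$.

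The main obstacle is to upgrade this identity from $U_n$ to all of $\mathcal A\setminus I$: a priori the implementing functional is tied to the single level $n$, whereas inner-ness is demanded for every $a\in\mathcal A\setminus I$. I would resolve this by noting that $\varphi_n^{-1}(\overline{I_n})$ is a proper closed subspace of the Fr$\acute{e}$chet space $\mathcal A$ (proper because $\overline{I_n}\neq\mathcal A_n$ and $\varphi_n$ has dense range), hence is nowhere dense, so $U_n$ is dense in $\mathcal A$. The derivation $D-\delta_\psi$ is continuous into the Banach space $E^{*}$ and vanishes on the dense set $U_n$, so it vanishes identically; in particular $D$ is inner on $\mathcal A\setminus I$, which gives amenability modulo $I$. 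This density argument is exactly where completeness and the reduced inverse-limit structure are essential, as anticipated in the remark preceding Theorem \ref{t5}.
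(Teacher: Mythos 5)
Your proof is correct and shares the paper's skeleton---reduce to the Banach levels of an Arens--Michael decomposition $\mathcal A=\underleftarrow{\lim}\mathcal A_n$, invoke the Banach-algebra equivalence of \cite[Theorem 7]{Rahimi1}, and pass back and forth via Proposition \ref{p8} and Corollary \ref{Cor6}---but it differs in one substantive respect: where the paper simply cites \cite[Theorem 2.3]{Rahnama} (``$\mathcal A$ is amenable modulo $I$ if and only if each $\mathcal A_n$ is amenable modulo $\overline{I_n}$''), you reprove both directions of that transfer from scratch. Your $(i)\Rightarrow(ii)$ lemma (pull the dual-module derivation back along the dense-range homomorphism, then push the implementing functional forward, using closedness of $J$ to keep the approximants $\varphi(a_k)$ off $J$ and hence the $a_k$ off $I$) and your $(ii)\Rightarrow(i)$ argument (factor a Banach-bimodule derivation through a single level $\mathcal A_n$ via $m$-compatibility, then upgrade innerness from $U_n=\varphi_n^{-1}(\mathcal A_n\setminus\overline{I_n})$ to all of $\mathcal A$ because the proper closed subspace $\varphi_n^{-1}(\overline{I_n})$ is nowhere dense) are both sound; the latter in fact yields the stronger conclusion that $D$ is globally inner, not merely inner on $\mathcal A\setminus I$. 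Two details to make explicit if you write this up: joint continuity of the Banach bimodule actions, needed to dominate both actions and $D$ by a single $p_n$, comes from \cite[3.4]{Pir1} (or the closed graph theorem); and the level $n$ with $\overline{I_n}\neq\mathcal A_n$ must be chosen at least as large as the level forced by the module, which is legitimate because $\overline{I_m}\neq\mathcal A_m$ persists for all $m\geq n$ since the linking maps have dense range. The trade-off is clear: the paper's proof is a few lines but leans on an unpublished companion result, while yours is self-contained modulo the Banach theory of \cite{Rahimi1}.
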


\begin{proof}
$(i)\Rightarrow (ii)$. Suppose that  $\mathcal A=\underleftarrow{\lim}\mathcal A_n$
is an Arens-Michael decomposition of $\mathcal A$ and $I=\underleftarrow{\lim}\overline{I_n}$ is an 
Arens-Michael decomposition of $I$. Since $\mathcal A$ is amenable modulo $I$,
by \cite[Theorem 2.3]{Rahnama}   each $\mathcal A_n$ is amenable modulo $\overline{I_n}$. 
So   $\mathcal A_n$ has a 
bounded approximate diagonal modulo $\overline{I_n}$ for each 
$n\in\mathbb{N}$ by \cite[Theorem 7]{Rahimi1}. Then    $\mathcal A$ has a locally bounded 
approximate diagonal modulo $I$, by Corollary \ref{Cor6}.
\\
$(ii)\Rightarrow (i)$. Let  $\mathcal A$ has a locally bounded 
approximate diagonal modulo $I$ and $\mathcal A=\underleftarrow{\lim}\mathcal A_n$
be an Arens-Michael decomposition of $\mathcal A$ and $I=\underleftarrow{\lim}\overline{I_n}$ be an 
Arens-Michael decomposition of $I$.  Therefore by Corollary \ref{Cor6},
 $\mathcal A_n$ has a 
bounded approximate diagonal modulo $\overline{I_n}$ for each 
$n\in\mathbb{N}$. So  each $\mathcal A_n$
is amenable modulo $\overline{I_n}$, by \cite[Theorem 7]{Rahimi1}. Thus 
 $\mathcal A$  is amenable modulo $I$, by \cite[Theorem 2.3]{Rahnama} .
\end{proof}

\begin{corollary}\label{cor10}
Let $(\mathcal A,p_n)$ be a  Fr$\acute{e}$chet algebra and $I$ be a closed ideal of
 $\mathcal A$. If $\mathcal A$ has a locally bounded approximate diagonal modulo $I$, then
 $\frac{\mathcal A}{I}$ has a locally bounded approximate diagonal. 
\end{corollary}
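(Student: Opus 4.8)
The plan is to obtain this as an immediate instance of Theorem \ref{t5}, taking the canonical quotient map as the homomorphism and the zero ideal as the target ideal. Let $q\colon(\mathcal A,p_n)\to(\frac{\mathcal A}{I},\hat p_n)$ be the quotient homomorphism. Being surjective, $q$ is a continuous homomorphism with dense range, and $\{0\}$ is a closed ideal of $\frac{\mathcal A}{I}$ (recall that every locally convex space here is Hausdorff). Since $q(I)=\{0\}\subseteq\{0\}$, the hypotheses of Theorem \ref{t5} are satisfied with $\mathcal B=\frac{\mathcal A}{I}$ and $J=\{0\}$. As $\mathcal A$ is assumed to have a locally bounded approximate diagonal modulo $I$, that theorem yields that $\frac{\mathcal A}{I}$ has a locally bounded approximate diagonal modulo $\{0\}$.

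It then remains only to observe that a locally bounded approximate diagonal modulo the zero ideal is precisely a locally bounded approximate diagonal in the ordinary sense. Specializing Definition \ref{D5} to the algebra $\frac{\mathcal A}{I}$ and its zero ideal, the quotient of $\frac{\mathcal A}{I}$ by $\{0\}$ is $\frac{\mathcal A}{I}$ itself up to topological isomorphism, the associated quotient seminorms collapse to $\hat p_n$ on $\frac{\mathcal A}{I}$ and to $\hat r_n$ on $\frac{\mathcal A}{I}\widehat{\otimes}\frac{\mathcal A}{I}$, the coset $\tilde b=b+\{0\}$ equals $b$, and the diagonal operator coincides with $\pi_{\frac{\mathcal A}{I}}$. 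Under these identifications the two defining estimates of Definition \ref{D5} become exactly $\hat r_n(M)\le C_n$, $\hat r_n(a.M-M.a)<\varepsilon$, and $\hat p_n(a.\pi_{\frac{\mathcal A}{I}}(M)-a)<\varepsilon$, which are the requirements for a locally bounded approximate diagonal of $\frac{\mathcal A}{I}$.

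I do not expect any genuine obstacle, since the entire analytic content is already absorbed into Theorem \ref{t5}; the only point deserving a word of care is the bookkeeping discrepancy between the index set $F\subseteq(\frac{\mathcal A}{I})\setminus\{0\}$ of the ``modulo $\{0\}$'' formulation and a general finite set $F\subseteq\frac{\mathcal A}{I}$ one might use for the plain notion. This is harmless: for the element $a=0$ the commutator estimate reads $\hat r_n(0\cdot M-M\cdot 0)=\hat r_n(0)=0<\varepsilon$ and holds trivially, so adjoining $0$ to $F$ imposes no new condition. (Alternatively, one could route the argument through Corollary \ref{Cor6} together with Proposition \ref{p8}, passing to the Arens--Michael decompositions of $\frac{\mathcal A}{I}$, but the direct appeal to Theorem \ref{t5} is the shortest path.)
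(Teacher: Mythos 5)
Your first step coincides exactly with the paper's: both apply Theorem \ref{t5} to the quotient map $q_{\mathcal A}:\mathcal A\to\frac{\mathcal A}{I}$ with target ideal $J=\{0\}$ to conclude that $\frac{\mathcal A}{I}$ has a locally bounded approximate diagonal modulo the zero ideal. You diverge in the second step. The paper does \emph{not} identify the ``modulo $\{0\}$'' notion with the plain one directly; instead it deduces that $\frac{\mathcal A}{I}$ is amenable (amenability modulo the zero ideal being ordinary amenability) and then invokes Pirkovskii's Theorem 9.7 from \cite{Pir} to recover a locally bounded approximate diagonal. Your route is more elementary and avoids that nontrivial external theorem, but it carries a definitional caveat the paper's detour sidesteps: the unqualified term ``locally bounded approximate diagonal'' in this paper refers to Pirkovskii's Definition 6.2, which is phrased via zero neighborhoods $U$ and the sets $\overline{\Gamma(U\otimes U)}$, whereas specializing Definition \ref{D5} to $I=\{0\}$ yields a seminorm formulation ($\hat r_n(M)\le C_n$, etc.). These are equivalent for a Fr\'echet algebra --- the gauge of $\overline{\Gamma(U_n\otimes U_n)}$ is essentially the projective tensor seminorm $r_n$, and it suffices to check Pirkovskii's condition on a base of neighborhoods --- but your claim that the two notions are ``precisely'' the same glosses over this translation, which deserves a sentence. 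With that point made explicit, your argument is a valid and arguably cleaner alternative to the paper's appeal to amenability; your remark that adjoining $0$ to $F$ imposes no new condition correctly handles the only quantifier discrepancy.
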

\begin{proof}
Since $q_{\mathcal A}:\mathcal A\to \frac{\mathcal A}{I}$ is a continuous homomorphism with dense range 
by Theorem \ref{t5}, $\frac{\mathcal A}{I}$ has a locally bounded approximate diagonal modulo 
$I=\{0_{\frac{\mathcal A}{I}}\}$. So it is clear that $\frac{\mathcal A}{I}$ is amenable. By 
applying \cite[Theorem 9.7]{Pir}, $\frac{\mathcal A}{I}$ has a locally bounded approximate diagonal.
\end{proof}

In the next propositions we give two hereditary properties of amenability 
modulo an ideal for Fr$\acute{e}$chet algebras.
\begin{proposition}
Let $(\mathcal A, p_n)$  be a  Fr$\acute{e}$chet algebra and $I$ be a closed ideal of
$\mathcal A$. Then $\mathcal A$ is amenable modulo $I$  if and only if $\mathcal A_{e}$ 
 is amenable modulo $I$.
\end{proposition}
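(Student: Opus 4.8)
The plan is to reduce the statement to the already-established Banach algebra analogue by passing through an Arens--Michael decomposition, rather than trying to transport derivations directly between $\mathcal A$ and $\mathcal A_e$. First I would fix an Arens--Michael decomposition $\mathcal A=\underleftarrow{\lim}\mathcal A_n$ together with the decomposition $I=\underleftarrow{\lim}\overline{I_n}$ of the ideal, and then identify the corresponding decomposition of the unitization $\mathcal A_e$.

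The key preliminary step is to describe the Arens--Michael decomposition of $\mathcal A_e$. I would equip $\mathcal A_e=\mathcal A\oplus\mathbb C e$ with the seminorms $\tilde p_n(a+\lambda e)=p_n(a)+|\lambda|$; a short computation shows these are increasing and submultiplicative, so $(\mathcal A_e,\tilde p_n)$ is a Fr$\acute e$chet algebra whose subspace topology on $\mathcal A$ recovers the original one. Since $\ker\tilde p_n=\ker p_n$, the normed quotient $\mathcal A_e/\ker\tilde p_n$ is the unitization of $\mathcal A/\ker p_n$, and passing to completions gives $(\mathcal A_e)_n\cong(\mathcal A_n)_e$. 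Because $I\subseteq\mathcal A$ and $\mathcal A$ is an ideal of $\mathcal A_e$, the set $I$ is a closed ideal of $\mathcal A_e$, and its $n$-th component inside $(\mathcal A_e)_n=(\mathcal A_n)_e$ is precisely the same $\overline{I_n}$ obtained inside $\mathcal A_n$. Thus $\mathcal A_e=\underleftarrow{\lim}(\mathcal A_n)_e$ with $I=\underleftarrow{\lim}\overline{I_n}$ as a closed ideal of $\mathcal A_e$.

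With these identifications the argument becomes a chain of biconditionals. By \cite[Theorem 2.3]{Rahnama}, $\mathcal A$ is amenable modulo $I$ if and only if each $\mathcal A_n$ is amenable modulo $\overline{I_n}$. By the corresponding unitization result for Banach algebras, $\mathcal A_n$ is amenable modulo $\overline{I_n}$ if and only if $(\mathcal A_n)_e$ is amenable modulo $\overline{I_n}$. Applying \cite[Theorem 2.3]{Rahnama} once more, now to the Fr$\acute e$chet algebra $\mathcal A_e$ with its decomposition $\mathcal A_e=\underleftarrow{\lim}(\mathcal A_n)_e$, the latter holds for every $n$ if and only if $\mathcal A_e$ is amenable modulo $I$. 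Concatenating these equivalences yields the assertion; note that both directions are obtained simultaneously since every link in the chain is an ``if and only if''.

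I expect the main obstacle to lie not in the chain of equivalences but in justifying its application to $\mathcal A_e$, namely the clean identification $(\mathcal A_e)_n\cong(\mathcal A_n)_e$ with $\overline{I_n}$ as the correct components. This reduction route is preferable to a direct argument: if one simply restricts a continuous derivation $D\colon\mathcal A_e\to E^*$ (with $E.I=I.E=0$) to $\mathcal A$, invokes innerness on $\mathcal A\setminus I$, and tries to read off innerness on $\mathcal A_e\setminus I$, the argument stalls on elements $a+\lambda e$ with $a\in I$ and $\lambda\neq 0$. Such elements lie in $\mathcal A_e\setminus I$, yet the value $D(a+\lambda e)$ is governed by $D$ on the part of $\mathcal A$ lying inside $I$, where amenability modulo $I$ asserts nothing. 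Routing through the Banach case sidesteps this gap, since the unitization theorem there is already arranged to accommodate it.
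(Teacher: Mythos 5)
Your proposal is correct and follows essentially the same route as the paper: reduce via the Arens--Michael decomposition, apply \cite[Theorem 2.3]{Rahnama} on both ends, and use the Banach unitization result \cite[Theorem 3.2]{Rahimi3} for each $\mathcal A_n$. The only difference is that you verify the identification $\mathcal A_e=\underleftarrow{\lim}(\mathcal A_n)_e$ by hand, whereas the paper simply cites \cite[Proposition 3.11]{F} for it.
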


\begin{proof}
Suppose that $\mathcal A=\underleftarrow{\lim}\mathcal A_n$
be an Arens-Michael decomposition of $\mathcal A$ and $I=\underleftarrow{\lim}\overline{I_n}$ is an 
Arens-Michael decomposition of $I$. By \cite[Theorem 2.3]{Rahnama},   $\mathcal A$ is
 amenable modulo $I$ if and only if for 
 each $n\in\mathbb{N}$, $\mathcal A_n$ is amenamble modulo $\overline{I_n}$. 
This is equivalent to     $(\mathcal A_n)_{e}$ is 
amenamble modulo $\overline{I_n}$, for each $n\in\mathbb{N}$ by  
  using \cite[Theorem 3.2]{Rahimi3}.
On the other hand by \cite[Proposition 3.11]{F},  $\mathcal A_e=\underleftarrow{\lim}(\mathcal A_n)_{e}$.
So $\mathcal A$ is amenable modulo $I$  if  and only if ${\mathcal A}_{e}$ 
 is amenable modulo $I$.
\end{proof}
\begin{proposition}
Let $(\mathcal A,p_n)$ be a  Fr$\acute{e}$chet algebra and $I$ be a closed ideal of
 $\mathcal A$ and $J$ be a closed ideal of $I$. If  $\mathcal A$  is amenable modulo $I$, then
 $\frac{\mathcal A}{J}$ is amenable modulo $\frac{I}{J}$. 
\end{proposition}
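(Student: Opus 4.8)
The cleanest route is to feed the quotient map into the transfer theorem already established, rather than re-run a derivation argument from scratch. Throughout I read the hypothesis as: $J$ is a closed ideal of $\mathcal{A}$ with $J\subseteq I$, so that $\mathcal{A}/J$ is again a Fr\'echet algebra and $I/J$ is a closed ideal of it. Let $q:\mathcal{A}\to\mathcal{A}/J$ be the quotient homomorphism. First I would record the three structural facts that make the machinery applicable: $q$ is continuous with dense range (indeed surjective); $I/J=q(I)$ is a closed ideal of $\mathcal{A}/J$, because $q^{-1}(q(I))=I+J=I$ is closed and $q$ is a quotient map; and trivially $q(I)\subseteq I/J$.

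Granting these, the proof is a two-step application of earlier results. Since $\mathcal{A}$ is amenable modulo $I$, Theorem \ref{t13} (the implication $(i)\Rightarrow(ii)$) gives that $\mathcal{A}$ has a locally bounded approximate diagonal modulo $I$. I would then apply Theorem \ref{t5} to the continuous dense-range homomorphism $q:\mathcal{A}\to\mathcal{A}/J$, taking the closed ideal of the source to be $I$ and the closed ideal of the target to be $I/J$; the required inclusion $q(I)\subseteq I/J$ holds with equality. The conclusion of Theorem \ref{t5} is that $\mathcal{A}/J$ has a locally bounded approximate diagonal modulo $I/J$.

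Finally I would close the loop with Theorem \ref{t13} applied to the Fr\'echet algebra $\mathcal{A}/J$ and its closed ideal $I/J$: the implication $(ii)\Rightarrow(i)$ converts the locally bounded approximate diagonal modulo $I/J$ back into amenability modulo $I/J$, which is the desired conclusion. The computational content is essentially nil; the only point that genuinely needs care is the verification of the hypotheses of Theorem \ref{t5}, and in particular the closedness of $I/J$ in $\mathcal{A}/J$, which is exactly where the assumption $J\subseteq I$ enters. Should one prefer a self-contained argument, the same result follows directly from Definition \ref{D1}: given a Banach $(\mathcal{A}/J)$-bimodule $E$ with $(I/J)\cdot E=E\cdot(I/J)=0$, pull its actions back along $q$ to make $E$ a Banach $\mathcal{A}$-bimodule with $I\cdot E=E\cdot I=0$, lift any continuous derivation $D:\mathcal{A}/J\to E^{*}$ to $\widetilde{D}=D\circ q$, implement $\widetilde{D}$ by some $\varphi\in E^{*}$ on $\mathcal{A}\setminus I$ using amenability of $\mathcal{A}$ modulo $I$, and observe that $a+J\in(\mathcal{A}/J)\setminus(I/J)$ precisely when $a\in\mathcal{A}\setminus I$, so that $D(a+J)=(a+J)\cdot\varphi-\varphi\cdot(a+J)$ there.
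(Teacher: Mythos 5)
Your argument is correct, but it follows a genuinely different route from the paper's. You stay entirely at the Fr\'echet level: you convert amenability modulo $I$ into a locally bounded approximate diagonal modulo $I$ via Theorem \ref{t13}, push that diagonal forward along the (surjective, hence dense-range) quotient map $q:\mathcal A\to\mathcal A/J$ using the transfer theorem (Theorem \ref{t5}) with target ideal $I/J$, and then convert back with Theorem \ref{t13}; your preliminary checks (closedness of $I/J$ because $q^{-1}(I/J)=I+J=I$, and $q(I)\subseteq I/J$) are exactly the hypotheses Theorem \ref{t5} needs, and your alternative direct argument from Definition \ref{D1} is also sound, since $a+J\in I/J$ iff $a\in I$ when $J\subseteq I$. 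The paper instead descends to the Banach level: it takes Arens--Michael decompositions $\mathcal A=\underleftarrow{\lim}\mathcal A_n$, $I=\underleftarrow{\lim}\overline{I_n}$, $J=\underleftarrow{\lim}\overline{J_n}$, uses its Theorem 2.3 from the companion paper to reduce to amenability of each $\mathcal A_n$ modulo $\overline{I_n}$, invokes the known Banach-algebra quotient result of Rahimi--Tahmasebi to get $\mathcal A_n/\overline{J_n}$ amenable modulo $\overline{I_n}/\overline{J_n}$, and reassembles. Your version buys self-containedness within this paper (it mirrors the proof of Corollary \ref{cor10}) and sidesteps the unstated identification of $\underleftarrow{\lim}\bigl(\mathcal A_n/\overline{J_n}\bigr)$ with an Arens--Michael decomposition of $\mathcal A/J$; the paper's version buys a proof that leans only on the decomposition machinery and known Banach results rather than on the diagonal characterization. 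Both are valid.
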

\begin{proof}
Let $\mathcal A=\underleftarrow{\lim}\mathcal A_n$
be an Arens-Michael decomposition of $\mathcal A$ and $I=\underleftarrow{\lim}\overline{I_n}$ be an 
Arens-Michael decomposition of $I$ and $J=\underleftarrow{\lim}\overline{J_n}$
be an Arens-Michael decomposition of $J$. Since  $\mathcal A$  is amenable modulo $I$,
by Theorem  \cite[Theorem 2.3]{Rahnama}, each $\mathcal A_n$
is amenable modulo $\overline{I_n}$. Therefore 
$\frac{\mathcal A_n}{\overline{J_n}}$, is amenable modulo $\frac{\overline{I_n}}{\overline{J_n}}$ 
for each $n\in\mathbb{N}$, by \cite[Theorem 3.8]{Rahimi3}.
So  $\frac{\mathcal A}{J}$ is amenable modulo $\frac{I}{J}$, by \cite[Theorem 2.3]{Rahnama}.  
\end{proof}

{\bf Acknowledgment.} This work was supported by Iran national science  foundation and we
would like to acknowledge it with much appreciation. Also we are thankful
to university of Isfahan in developing the project.

\footnotesize

\vspace{9mm}

{\footnotesize \noindent
 S. Rahnama\\
  Department of Mathematics,
   University of Isfahan,
    Isfahan, Iran\\
     rahnama$_{-}600$@yahoo.com\\

\noindent
 A. Rejali\\
  Department of Mathematics,
   University of Isfahan,
    Isfahan, Iran\\
    rejali@sci.ui.ac.ir\\

\end{document}